\documentclass[11pt]{amsart}
\usepackage{latexsym,amsfonts,amssymb}
\usepackage{hyperref}

\setcounter{page}{1} \setlength{\textwidth}{13.5cm}
\setlength{\textheight}{22.5cm} \setlength{\evensidemargin}{0.8cm}
\setlength{\oddsidemargin}{0.8cm} \setlength{\topmargin}{0.8cm}

\newtheorem{theorem}{Theorem}[section]
\newtheorem{lemma}[theorem]{Lemma}
\newtheorem{corollary}[theorem]{Corollary}
\newtheorem{question}[theorem]{Question}
\theoremstyle{definition}
\newtheorem{definition}[theorem]{Definition}

\theoremstyle{remark}

\begin{document}
\noindent \vspace{0.5in}

\title[Sequence-covering maps on generalized metric spaces]%
{Sequence-covering maps on generalized metric spaces}

\author{Fucai\ \ Lin*}
\thanks{*The corresponding author.}
\address{(Fucai Lin)Department of Mathematics,
Zhangzhou Normal University, Zhangzhou 363000, P. R.China}
\email{linfucai2008@yahoo.com.cn}
\author{Shou Lin}
\address{(Shou Lin)Institute of Mathematics, Ningde Teachers' College, Ningde, Fujian
352100, P. R. China} \email{linshou@public.ndptt.fj.cn}

\subjclass[2000]{54C10; 54E40; 54E99} \keywords{Sequence-covering
maps; boundary-compact maps; closed maps; point-countable bases;
$g$-metrizable; strongly monotonically monolithic spaces;
$\sigma$-point-discrete $k$-network.}

\begin{abstract} Let $f:X\rightarrow Y$ be a map. $f$ is a {\it sequence-covering
map}\cite{Si1} if whenever $\{y_{n}\}$ is a convergent sequence in
$Y$ there is a convergent sequence $\{x_{n}\}$ in $X$ with each
$x_{n}\in f^{-1}(y_{n})$; $f$ is an {\it 1-sequence-covering
map}\cite{Ls2} if for each $y\in Y$ there is $x\in f^{-1}(y)$ such
that whenever $\{y_{n}\}$ is a sequence converging to $y$ in $Y$
there is a sequence $\{x_{n}\}$ converging to $x$ in $X$ with each
$x_{n}\in f^{-1}(y_{n})$. In this paper, we mainly discuss the
sequence-covering maps on generalized metric spaces, and give an
affirmative answer for a question in \cite{LL1} and some related
questions, which improve some results in \cite{LL1, Ls4, YP},
respectively. Moreover, we also prove that open and closed maps
preserve strongly monotonically monolithity, and closed
sequence-covering maps preserve spaces with a
$\sigma$-point-discrete $k$-network. Some questions about
sequence-covering maps on generalized metric spaces are posed.
\end{abstract}

\maketitle

\section{Introduction}
A study of images of topological spaces under certain
sequence-covering maps is an important question in general topology
\cite{GMT, LJ, Li, LL1, Ls3, LY, LZGG, LC2, YP}. S. Lin and P. F.
Yan in \cite{LY} proved that each sequence-covering and compact map
on metric spaces is an 1-sequence-covering map. Recently, F. C. Lin
and S. Lin in \cite{LL1} proved that each sequence-covering and
boundary-compact map on metric spaces is an 1-sequence-covering map.
Also, the authors posed the following question in \cite{LL1} :

\begin{question}\cite[Question 3.6]{LL1}
Let $f:X\rightarrow Y$ be a sequence-covering and boundary-compact
map. Is $f$ an 1-sequence-covering map if $X$ is a space with a
point-countable base or a developable space?
\end{question}

In this paper, we shall give an affirmative answer for Question 1.1.

S. Lin in \cite[Theorem 2.2]{Ls4} proved that if $X$ is a metrizable
space and $f$ is a sequence-quotient and compact map, then $f$ is a
pseudo-sequence-covering map. Recently, C. F. Lin and S. Lin in
\cite{LL1} proved that if $X$ is a metrizable space and $f$ is a
sequence-quotient and boundary-compact map, then $f$ is a
pseudo-sequence-covering map. Hence we have the following Question
1.2.

\begin{question}
Let $f:X\rightarrow Y$ be a sequence-quotient and boundary-compact
map. Is $f$ a pseudo-sequence-covering map if $X$ is a space with a
point-countable base or a developable space?
\end{question}

On the other hand, the authors in \cite{YP} proved that each closed
sequence-covering map on metric spaces is an 1-sequence-covering
map. Hence we have the following Question 1.3.

\begin{question}
Let $f:X\rightarrow Y$ be a closed sequence-covering map. Is $f$ an
1-sequence-covering map if $X$ is a regular space with a
point-countable base or a developable space?
\end{question}

In this paper, we shall we give an affirmative answer for Question
1.2, which improves some results in \cite{LL1} and \cite{Ls4},
respectively. Moreover, we give an affirmative answer for Question
1.3 when $X$ has a point-countable base or $X$ is $g$-metirzable. In
\cite{TV}, V. V. Tkachuk introduced the strongly monotonically
monolithic spaces. In this paper, we also prove that strongly
monotonically monolithities are preserved by open and closed maps,
and spaces with a $\sigma$-point-discrete $k$-network are preserved
by closed sequence-covering maps.

\vskip 1cm\setlength{\parindent}{1cm}
\section{Definitions and terminology}
Let $X$ be a space. For $P\subset X$, $P$ is a {\it sequential
neighborhood} of $x$ in $X$ if every sequence converging to $x$ is
eventually in $P$.

\begin{definition}
Let $\mathcal{P}=\bigcup_{x\in X}\mathcal{P}_{x}$ be a cover of a
space $X$ such that for each $x\in X$, (a)\ if $U,V\in
\mathcal{P}_{x}$, then $W\subset U\cap V$ for some $W\in
\mathcal{P}_{x}$; (b)\ $\mathcal{P}_{x}$ is a network of $x$ in $X$,
i.e., $x\in\bigcap\mathcal{P}_x$, and if $x\in U$ with $U$ open in
$X$, then $P\subset U$ for some $P\in\mathcal P_x$.

(1)$\mathcal{P}$ is called an {\it $sn$-network} for $X$ if each
element of $\mathcal{P}_{x}$ is a sequential neighborhood of $x$ in
$X$ for each $x\in X$. $X$ is called {\it
$snf$-countable}\cite{Ls3}, if $X$ has an $sn$-network $\mathcal P$
such that each $\mathcal P_x$ is countable.

(2)$\mathcal{P}$ is called a {\it weak base}\cite{Ar} for $X$ if
whenever $G\subset X$ satisfying for each $x\in X$ there is a $P\in
\mathcal{P}_{x}$ with $P\subset G$,\ $G$ is open in $X$. $X$ is {\it
$g$-metrizable}\cite{Si2} if $X$ is regular and has a
$\sigma$-locally finite weak base.
\end{definition}

\begin{definition}
Let $f:X\rightarrow Y$ be a map.
\begin{enumerate}
\item $f$ is a {\it compact} (resp. {\it separable}) map if each
$f^{-1}(y)$ is compact (separable) in $X$;

\item $f$ is a {\it boundary-compact}(resp. {\it boundary-separable}) map if each $\partial f^{-1}(y)$
is compact (separable) in $X$;

\item $f$ is a {\it sequence-covering
map}\cite{Si1} if whenever $\{y_{n}\}$ is a convergent sequence in
$Y$ there is a convergent sequence $\{x_{n}\}$ in $X$ with each
$x_{n}\in f^{-1}(y_{n})$;

\item $f$ is an {\it 1-sequence-covering
map}\cite{Ls2} if for each $y\in Y$ there is $x\in f^{-1}(y)$ such
that whenever $\{y_{n}\}$ is a sequence converging to $y$ in $Y$
there is a sequence $\{x_{n}\}$ converging to $x$ in $X$ with each
$x_{n}\in f^{-1}(y_{n})$;

\item $f$ is a {\it sequentially quotient map}\cite{BS} if whenever
$\{y_{n}\}$ is a convergent sequence in $Y$ there is a convergent
sequence $\{x_{k}\}$ in $X$ with each $x_{k}\in f^{-1}(y_{n_{k}})$;

\item $f$ is a {\it pseudo-sequence-covering map}\cite{GMT, ILT} if for each
convergent sequence $L$ in $Y$ there is a compact subset $K$ in $X$
such that $f(K)=\overline{L}$;
\end{enumerate}
\end{definition}

It is obvious that

\setlength{\unitlength}{1cm}
\begin{picture}(15,1.5)\thicklines
 \put(2.1,0){\makebox(0,0){1-sequence-covering maps}}
 \put(4.3,0){\vector(1,0){1}}
 \put(7.4,0){\makebox(0,0){sequence-covering maps}}
 \put(9,0.3){\vector(2,1){1}}
 \put(9.5,1.1){\makebox(0,0){pseudo-sequence-covering maps}}
 \put(9,-0.3){\vector(2,-1){1}}
 \put(10,-1){\makebox(0,0){sequential quotient maps.}}
\end{picture}

\vskip 1.4cm Remind readers attention that the sequence-covering
maps defined the above-mentioned are different from the
sequence-covering maps defined in \cite{GMT}, which is called
pseudo-sequence-covering maps in this paper.

\begin{definition}\cite{MN}
Let $A$ be a subset of a space $X$. We call an open family
$\mathcal{N}$ of subsets of $X$ is an {\it external base} of $A$ in
$X$ if for any $x\in A$ and open subset $U$ with $x\in U$ there is a
$V\in \mathcal{N}$ such that $x\in V\subset U$.
\end{definition}

Similarly,  we can define an {\it externally weak base} for a subset
$A$ for a space $X$.

Throughout this paper all spaces are assumed to be Hausdorff, all
maps are continuous and onto. The letter $\mathbb{N}$ will denote
the set of positive integer numbers. Readers may refer to \cite{En,
Gr, Ls3} for unstated definitions and terminology.

\vskip 1cm\setlength{\parindent}{1cm}
\section{Sequence-covering and boundary-compact maps}
Let ${\it \Omega}$ be the sets of all topological spaces such that,
for each compact subset $K\subset X\in {\it \Omega}$, $K$ is
metrizable and also has a countably neighborhood base in $X$. In
fact, E. A. Michael and K. Nagami in \cite{MN} has proved that $X\in
{\it \Omega}$ if and only if $X$ is the image of some metric space
under an open and compact-covering\footnote{Let $f:X\rightarrow Y$
be a map. $f$ is called a {\it compact-covering map}\cite{MN} if in
case $L$ is compact in $Y$ there is a compact subset $K$ of $X$ such
that $f(K)= L$.} map. It is easy to see that if a space $X$ is
developable or has a point-countable base, then $X\in {\it \Omega}$
(see \cite{AB} and \cite{TV}, respectively).

In this paper, when we say an $snf$-countable space $Y$, it is
always assume that $Y$ has an $sn$-network
$\mathcal{P}=\cup\{\mathcal{P}_{y}:y\in Y\}$ such that
$\mathcal{P}_{y}$ is countable and closed under finite intersections
for each point $y\in Y$.

\begin{lemma}\label{l5}
Let $f:X\rightarrow Y$ be a sequence-covering and boundary-compact
map, where $Y$ is $snf$-countable. For each non-isolated point $y\in
Y$, there exists a point $x_{y}\in\partial f^{-1}(y)$ such that
whenever $U$ is an open subset with $x_{y}\in U$, there exists a
$P\in\mathcal{P}_{y}$ satisfying $P\subset f(U)$
\end{lemma}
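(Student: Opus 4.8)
The plan is to argue by contradiction, reducing the statement to a single, cleverly interleaved sequence whose lift is \emph{forced} onto the compact boundary. First I would record two preliminaries. Since $Y$ is Hausdorff the fibre $f^{-1}(y)$ is closed, so $\partial f^{-1}(y)=f^{-1}(y)\setminus\operatorname{int}f^{-1}(y)\subseteq f^{-1}(y)$; and since $y$ is non-isolated there is a nontrivial sequence converging to $y$, which the sequence-covering property lifts to a sequence converging to a point of $\partial f^{-1}(y)$ (the limit cannot be interior, as we see below), so $K:=\partial f^{-1}(y)\neq\varnothing$. Because $\mathcal{P}_y$ is countable and closed under finite intersections, I may replace it by a decreasing network $\{P_n\}_{n\in\mathbb{N}}$ at $y$ (put $P_n=Q_1\cap\dots\cap Q_n$ for an enumeration $\{Q_n\}$ of $\mathcal{P}_y$); the only facts I will use are that $\{P_n\}$ is decreasing and that every open neighbourhood of $y$ contains some $P_n$, so that any choice $p_n\in P_n$ automatically satisfies $p_n\to y$.

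Now suppose the conclusion fails. Then for every $x\in K$ there is an open $U_x\ni x$ with $P\not\subseteq f(U_x)$ for all $P\in\mathcal{P}_y$. The family $\{U_x:x\in K\}$ covers the compact set $K$, so I extract a finite subcover $U_1,\dots,U_k$ (writing $U_i=U_{x_i}$), with $K\subseteq U_1\cup\dots\cup U_k$. Each $x_i\in K\subseteq f^{-1}(y)$ gives $y\in f(U_i)$; together with $P_n\not\subseteq f(U_i)$ this lets me choose, for all $i$ and $n$, a point $p^i_n\in P_n\setminus f(U_i)$ with $p^i_n\neq y$. For fixed $i$ the sequence $\{p^i_n\}_n$ converges to $y$, is nontrivial, and lies entirely outside $f(U_i)$.

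The decisive step is to amalgamate these $k$ sequences into one. I interleave them into a single sequence $\mu$ (say $\mu_{kn+i}=p^i_n$), which still converges to $y$, has all terms $\neq y$, and, crucially, meets $Y\setminus f(U_i)$ infinitely often for \emph{every} $i$. Applying sequence-covering to $\mu$ gives $x_m\to x^*$ with $x_m\in f^{-1}(\mu_m)$ and $f(x^*)=y$. Since $\mu$ is nontrivial, $x^*$ cannot lie in $\operatorname{int}f^{-1}(y)$ (otherwise the $x_m$ would eventually enter that open set, forcing $\mu_m=y$ eventually), so $x^*\in K$ and hence $x^*\in U_{i_0}$ for some $i_0$. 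As $U_{i_0}$ is open and $x_m\to x^*$, we get $x_m\in U_{i_0}$ eventually, whence $\mu_m=f(x_m)\in f(U_{i_0})$ eventually, contradicting that $\mu$ leaves $f(U_{i_0})$ infinitely often. This contradiction shows the conclusion cannot fail.

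The hard part, and the reason the argument is not routine, is precisely the passage from one boundary point to the whole (possibly multi-point) compact boundary. The naive route would be to find a single sequence converging to $y$ that avoids $f(U_1\cup\dots\cup U_k)$, but this is hopeless: $f(\bigcup_i U_i)$ is in fact a sequential neighbourhood of $y$, so no such sequence exists; equivalently, a finite union of sets none of which contains any $P_n$ can still be "large" at $y$. The interleaving sidesteps this obstruction: instead of escaping the union, $\mu$ escapes each $f(U_i)$ infinitely often, and the uniqueness of the limit $x^*$ of its lift singles out one index $i_0$ into whose image $\mu$ is nonetheless eventually trapped, which is the contradiction.
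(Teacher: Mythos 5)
Your argument is correct and is essentially the paper's own proof: the paper likewise negates the conclusion, uses compactness of $\partial f^{-1}(y)$ to extract a finite subcover $U_{1},\dots,U_{n_{0}}$, replaces $\mathcal{P}_{y}$ by the decreasing intersections $F_{n}=\bigcap_{i=1}^{n}P_{i}$, picks $x_{n,m}\in F_{n}\setminus f(U_{m})$, interleaves these into a single sequence $y_{k}$ (via $k=(n-1)n_{0}+m$, exactly your $\mu$) converging to $y$, lifts it by the sequence-covering property, and derives the same contradiction at the single index $m_{0}$ with the lifted limit in $U_{m_{0}}$. The only differences are cosmetic: you spell out details the paper leaves implicit, namely that each chosen point differs from $y$ because $y\in f(U_{i})$, that consequently the lifted limit lies in $\partial f^{-1}(y)$ rather than in $\operatorname{int}f^{-1}(y)$, and that $\partial f^{-1}(y)\neq\emptyset$ (which the paper simply takes for granted, as do you in substance, since your existence claim for a nontrivial sequence converging to a non-isolated point is no better justified than the paper's tacit assumption).
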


\begin{proof}
Suppose not, there exists a non-isolated point $y\in Y$ such that
for every point $x\in \partial f^{-1}(y)$, there is an open
neighborhood $U_{x}$ of $x$ such that $P\not\subseteq f(U_{x})$ for
every $P\in \mathcal{P}_{y}$. Then $\partial
f^{-1}(y)\subset\cup\{U_{x}: x\in
\partial f^{-1}(y)\}$. Since $\partial f^{-1}(y)$ is compact, there exists a
finite subfamily $\mathcal{U}\subset \{U_{x}: x\in
\partial f^{-1}(y)\}$ such that $\partial f^{-1}(y)\subset\cup\mathcal{U}$. We denote
$\mathcal{U}$ by $\{U_{i}: 1\leq i\leq n_{0}\}$. Assume that
$\mathcal{P}_{y}=\{P_{n}:n\in\mathbb{N}\}$ and
$\mathcal{W}_{y}=\{F_{n}=\bigcap_{i=1}^{n}P_{i}:n\in\mathbb{N}\}$.
It is obvious that $\mathcal{W}_{y}\subset \mathcal{P}_{y}$ and
$F_{n+1}\subset F_{n}$, for every $n\in \mathbb{N}$. For each $1\leq
m\leq n_{0}, n\in\mathbb{N}$, it follows that there exists $x_{n,
m}\in F_{n}\setminus f(U_{m})$. Then denote $y_{k}=x_{n, m}$, where
$k=(n-1)n_{0}+m$. Since $\mathcal{P}_{y}$ is a network at point $y$
and $F_{n+1}\subset F_{n}$ for every $n\in \mathbb{N}$, $\{y_{k}\}$
is a sequence converging to $y$ in $Y$. Because $f$ is a
sequence-covering map, $\{y_{k}\}$ is an image of some sequence
$\{x_{k}\}$ converging to $x\in \partial f^{-1}(y)$ in $X$. From
$x\in \partial f^{-1}(y)\subset\cup\mathcal{U}$ it follows that
there exists $1\leq m_{0}\leq n_{0}$ such that $x\in U_{m_{0}}$.
Therefore, $\{x\}\cup\{x_{k}: k\geq k_{0}\}\subset U_{m_{0}}$ for
some $k_{0}\in \mathbb{N}$. Hence $\{y\}\cup\{y_{k}: k\geq
k_{0}\}\subset f(U_{m_{0}})$. However, we can choose an $n> k_{0}$
such that $k=(n-1)n_{0}+m_{0}\geq k_{0}$ and $y_{k}=x_{n, m_{0}}$,
which implies that $x_{n, m_{0}}\in f(U_{m_{0}})$. This
contradictions to $x_{n, m_{0}}\in F_{n}\setminus f(U_{m_{0}})$.
\end{proof}

The next lemma is obvious.

\begin{lemma}\label{l11}
Let $f:X\rightarrow Y$ be 1-sequence-covering, where $X$ is
$snf$-countable. Then $Y$ is $snf$-countable.
\end{lemma}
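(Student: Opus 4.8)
The plan is to push a countable $sn$-network of $X$ forward through $f$, basing it at the distinguished preimages that the $1$-sequence-covering hypothesis supplies. Fix a countable $sn$-network $\mathcal{P}=\bigcup_{x\in X}\mathcal{P}_{x}$ of $X$. For each $y\in Y$, the $1$-sequence-covering property hands us a point $x_{y}\in f^{-1}(y)$ such that every sequence converging to $y$ lifts to a sequence converging to $x_{y}$ with terms in the appropriate fibres. I would then define $\mathcal{Q}_{y}=\{f(P):P\in\mathcal{P}_{x_{y}}\}$ and $\mathcal{Q}=\bigcup_{y\in Y}\mathcal{Q}_{y}$, and show that $\mathcal{Q}$ is an $sn$-network for $Y$ in which each $\mathcal{Q}_{y}$ is countable; by definition this is exactly the assertion that $Y$ is $snf$-countable.

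The countability of each $\mathcal{Q}_{y}$ is immediate from that of $\mathcal{P}_{x_{y}}$. Three of the four remaining requirements are formal consequences of the corresponding properties of $\mathcal{P}_{x_{y}}$ together with the continuity and surjectivity of $f$: since $x_{y}\in\bigcap\mathcal{P}_{x_{y}}$ we get $y=f(x_{y})\in\bigcap\mathcal{Q}_{y}$; given $f(P),f(P')\in\mathcal{Q}_{y}$, condition (a) for $\mathcal{P}_{x_{y}}$ supplies $W\in\mathcal{P}_{x_{y}}$ with $W\subset P\cap P'$, and then $f(W)\subset f(P)\cap f(P')$ witnesses condition (a) for $\mathcal{Q}_{y}$; and if $y\in V$ with $V$ open in $Y$, then $f^{-1}(V)$ is an open neighborhood of $x_{y}$, so some $P\in\mathcal{P}_{x_{y}}$ lies inside $f^{-1}(V)$, whence $f(P)\subset V$, giving the network condition (b).

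The one step carrying genuine content---and the only place the $1$-sequence-covering hypothesis actually enters---is verifying that each $f(P)$ is a sequential neighborhood of $y$. Here I would take an arbitrary sequence $\{y_{n}\}$ converging to $y$, lift it by the $1$-sequence-covering property to a sequence $\{x_{n}\}$ converging to $x_{y}$ with each $x_{n}\in f^{-1}(y_{n})$, use that $P$ is a sequential neighborhood of $x_{y}$ in $X$ to conclude that $x_{n}\in P$ for all sufficiently large $n$, and then apply $f$ to obtain $y_{n}=f(x_{n})\in f(P)$ for all sufficiently large $n$. I do not anticipate any serious obstacle: the definition of a $1$-sequence-covering map is tailored precisely so that this lift-and-project argument goes through, which is exactly why the authors record the lemma as obvious.
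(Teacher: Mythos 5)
Your proof is correct, and it is precisely the lift-and-project argument the paper has in mind when it dismisses this lemma as obvious (the paper supplies no proof at all): push $\mathcal{P}_{x_y}$ forward through $f$ at the distinguished point $x_y$, verify conditions (a) and (b) formally via continuity and surjectivity, and use the $1$-sequence-covering lift converging to $x_y$ itself to see each $f(P)$ is a sequential neighborhood of $y$. All steps check out, including the one with genuine content, so there is nothing to add.
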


\begin{theorem}\label{t7}
Let $f:X\rightarrow Y$ be a sequence-covering and boundary-compact
map, where $X$ is first-countable. Then $Y$ is $snf$-countable if
and only if $f$ is an 1-sequence-covering map.
\end{theorem}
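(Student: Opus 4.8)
The plan is to prove the two implications separately; the backward direction falls out immediately from the lemmas already established, while the forward direction carries the real content.

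For the direction ``$f$ is 1-sequence-covering $\Rightarrow$ $Y$ is $snf$-countable'', I would first observe that a first-countable space is automatically $snf$-countable: in a first-countable space every neighborhood of a point is a sequential neighborhood, so a countable open neighborhood base at each point, closed under finite intersections (which keeps it countable), is a countable $sn$-network. Thus $X$ is $snf$-countable, and Lemma \ref{l11} applies verbatim to conclude that $Y$ is $snf$-countable.

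For the converse, I want to produce, for each $y\in Y$, a point $x\in f^{-1}(y)$ witnessing the 1-sequence-covering property. Isolated points are trivial: a sequence converging to an isolated $y$ is eventually equal to $y$, so any $x\in f^{-1}(y)$ works by lifting the tail constantly to $x$. For a non-isolated $y$, apply Lemma \ref{l5} to obtain a point $x_{y}\in\partial f^{-1}(y)\subset f^{-1}(y)$ such that every open $U\ni x_{y}$ satisfies $P\subset f(U)$ for some $P\in\mathcal{P}_{y}$. Using first-countability of $X$, fix a decreasing open neighborhood base $\{V_{n}\}$ at $x_{y}$. For each $n$, Lemma \ref{l5} gives $P_{n}\in\mathcal{P}_{y}$ with $P_{n}\subset f(V_{n})$; setting $Q_{n}=\bigcap_{i\le n}P_{i}$ and using that $\mathcal{P}_{y}$ is closed under finite intersections, I obtain a decreasing sequence $\{Q_{n}\}\subset\mathcal{P}_{y}$ of sequential neighborhoods of $y$ with $Q_{n}\subset f(V_{n})$. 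Now given any sequence $\{y_{k}\}\to y$, since each $Q_{n}$ is a sequential neighborhood of $y$ I can pick a strictly increasing sequence $k_{n}$ with $y_{k}\in Q_{n}$ for all $k\ge k_{n}$. For $k_{n}\le k<k_{n+1}$, the relation $y_{k}\in Q_{n}\subset f(V_{n})$ lets me choose $x_{k}\in f^{-1}(y_{k})\cap V_{n}$, and for $k<k_{1}$ I pick any preimage. Finally I verify $x_{k}\to x_{y}$: given a neighborhood $W$ of $x_{y}$, choose $N$ with $V_{N}\subset W$; then for $k\ge k_{N}$ one has $x_{k}\in V_{n}$ for some $n\ge N$, whence $x_{k}\in V_{N}\subset W$ by monotonicity of $\{V_{n}\}$. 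This exhibits the required lift and proves $f$ is 1-sequence-covering.

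The main obstacle lies entirely in the forward direction, specifically in coordinating two descending filtrations: the neighborhood base $\{V_{n}\}$ on the source side and the sequential neighborhoods $\{Q_{n}\}$ on the target side. The crux is that Lemma \ref{l5} precisely matches them via $Q_{n}\subset f(V_{n})$, so that being deep in $Q_{n}$ on the target forces the existence of a preimage deep in $V_{n}$ on the source; once this matching is secured, the convergence of the constructed lift is a routine threshold argument.
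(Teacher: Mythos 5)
Your proposal is correct and follows essentially the same route as the paper: the backward implication via Lemma~\ref{l11} (after noting first-countable implies $snf$-countable), and the forward implication via Lemma~\ref{l5} plus a decreasing neighborhood base at $x_{y}$, matched against sequential neighborhoods of $y$ through $P_{n}\subset f(V_{n})$ and lifted by the same threshold construction. Your only departures are cosmetic --- the paper uses the sets $f(B_{n})$ directly as sequential neighborhoods rather than intersecting the $P_{n}$'s into a decreasing chain $Q_{n}$, and it leaves the isolated-point case and the final convergence check implicit where you spell them out.
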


\begin{proof}
Necessity. Let $y$ be a non-isolated point in $Y$. Since $Y$ is
$snf$-countable, it follows from Lemma~\ref{l5} that there exists a
point $x_{y}\in\partial f^{-1}(y)$ such that whenever $U$ is an open
neighborhood of $x_{y}$, there is a $P\in \mathcal{P}_{y}$
satisfying $P\subset f(U)$. Let $\{B_{n}: n\in \mathbb{N}\}$ be a
countably neighborhood base at point $x_{y}$ such that
$B_{n+1}\subset B_{n}$ for each $n\in\mathbb{N}$. Suppose that
$\{y_{n}\}$ is a sequence in $Y$, which converges to $y$. Next, we
take a sequence $\{x_{n}\}$ in $X$ as follows.

Since $B_{n}$ is an open neighborhood of $x_{y}$, it follows from
the Lemma~\ref{l5} that there exists a $P_{n}\in\mathcal{P}_{y}$
such that $P_{n}\subset f(B_{n})$ for each $n\in \mathbb{N}$.
Because every $P\in\mathcal{P}_{y}$ is a sequential neighborhood, it
is easy to see that for each $n\in\mathbb{N}$, $f(B_{n})$ is a
sequential neighborhood of $y$ in $Y$. Therefore, for each
$n\in\mathbb{N}$, there is an $i_{n}\in\mathbb{N}$ such that
$y_{i}\in f(B_{n})$ for every $i\geq i_{n}$. Suppose that
$1<i_{n}<i_{n+1}$ for every $n\in \mathbb{N}$. Hence, for each $j\in
\mathbb{N}$, we take
\[x_{j}\in\left\{
\begin{array}{lll}
f^{-1}(y_{j}), & \mbox{if } j<i_{1},\\
f^{-1}(y_{j})\cap B_{n}, & \mbox{if } i_{n}\leq
j<i_{n+1}.\end{array}\right.\] We denote $S=\{x_{j}:j\in
\mathbb{N}\}$. It is easy to see that $S$ converges to $x_{y}$ in
$X$ and $f(S)=\{y_{n}\}$. Therefore, $f$ is an 1-sequence-covering
map.

Sufficiency. It easy to see that $Y$ is $snf$-countable by
Lemma~\ref{l11}.
\end{proof}

We don't know whether, in Theorem~\ref{t7}, $f$ is an
1-sequence-covering map when $X$ is only first-countable. However,
we have the following Theorem~\ref{t0}, which gives an affirmative
answer for Question 1.1. Firstly, we give some technique lemmas.

\begin{lemma}\cite{MN}\label{l0}
If $X\in {\it \Omega}$, then every compact subset of $X$ has a
countably external base.
\end{lemma}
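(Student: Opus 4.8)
The plan is to use the Michael--Nagami characterization recalled just above the lemma: since $X\in{\it\Omega}$, there are a metric space $M$ and an open, compact-covering map $g:M\to X$. I would reduce the statement about the compact set $K\subseteq X$ to a statement about a compact preimage in $M$, build a countable external base there by hand using the metric, and then push it forward through $g$. The openness of $g$ is what will make the pushed-forward family consist of open sets, while compact-coveringness is what supplies the preimage.

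First I would invoke compact-coveringness: since $K$ is compact in $X$, there is a compact set $C\subseteq M$ with $g(C)=K$. Next, working in the metric space $(M,\rho)$, I would construct a countable external base $\mathcal U$ of $C$ in $M$. For each $k\in\mathbb{N}$ choose a finite set $F_k\subseteq C$ with $C\subseteq\bigcup_{c'\in F_k}B_\rho(c',1/k)$, and set $\mathcal U=\{B_\rho(c',1/k):k\in\mathbb{N},\ c'\in F_k\}$. This $\mathcal U$ is countable and consists of open subsets of $M$, and for every $c\in C$ and every open $V\ni c$ in $M$ it contains a member $U$ with $c\in U\subseteq V$: pick $k$ with $2/k$ smaller than the radius of a $\rho$-ball about $c$ lying inside $V$, and pick $c'\in F_k$ with $c\in B_\rho(c',1/k)$; then $B_\rho(c',1/k)\subseteq B_\rho(c,2/k)\subseteq V$. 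Thus $\mathcal U$ is a countable external base of $C$ in $M$.

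Finally I would push $\mathcal U$ forward and set $\mathcal N=\{g(U):U\in\mathcal U\}$. Because $g$ is open, each $g(U)$ is open in $X$, and $\mathcal N$ is countable. To check the external-base property for $K$, take $x\in K$ and an open $V\ni x$ in $X$; since $g(C)=K$ there is $c\in C$ with $g(c)=x$, and $g^{-1}(V)$ is an open neighbourhood of $c$ in $M$, so some $U\in\mathcal U$ satisfies $c\in U\subseteq g^{-1}(V)$. Then $x=g(c)\in g(U)\subseteq g(g^{-1}(V))\subseteq V$, which is exactly what an external base requires.

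The only genuine obstacle is that $M$ need not be separable, so one must resist taking a global countable base of $M$ and instead extract the countable family $\mathcal U$ from finite $1/k$-covers of the \emph{compact} set $C$; once that is in place the rest is routine. One could instead argue straight from the definition of ${\it\Omega}$, using that $K$ is metrizable and has a decreasing countable neighbourhood base $\{W_n\}$ with $\bigcap_n W_n=K$, but then one has to thicken the basic open sets of $K$ into $X$ while controlling their closures near $K$, which is delicate without regularity; the image-of-a-metric-space route avoids this entirely and is the cleaner proof.
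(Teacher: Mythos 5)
Your proof is correct: compact-coveringness supplies a compact $C\subseteq M$ with $g(C)=K$, the finite $(1/k)$-nets of $C$ give a countable external base $\mathcal U$ of $C$ in $M$ (the triangle-inequality estimate $B_\rho(c',1/k)\subseteq B_\rho(c,2/k)$ is right), and openness of $g$ pushes $\mathcal U$ forward to a countable external base of $K$ in $X$. Note, however, that the paper gives no proof of this lemma at all --- it is quoted from Michael--Nagami \cite{MN} --- and your route inverts the logical order of the original development: in \cite{MN} the lemma is proved \emph{directly} from the definition of ${\it\Omega}$ and is then an ingredient in establishing the characterization of ${\it\Omega}$ as the open compact-covering images of metric spaces, which is precisely the theorem you take as a black box. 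Within this paper, where both facts are recalled as known, your deduction is legitimate and pleasantly short; as a self-contained proof it rests on a result considerably deeper than the lemma itself. Your closing remark about the direct route being ``delicate without regularity'' is also unfounded: no regularity of $X$ is needed, because disjoint compact sets in a Hausdorff space can be separated by disjoint open sets. Concretely, fix a countable base $\{B_m\}$ of the compact metrizable space $K$ and a countable neighbourhood base $\{W_n\}$ of $K$ in $X$; for each pair $(m,m')$ with $\mathrm{cl}_K(B_m)\subseteq B_{m'}$ choose disjoint open sets $G_{m,m'}\supseteq \mathrm{cl}_K(B_m)$ and $H_{m,m'}\supseteq K\setminus B_{m'}$ in $X$. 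The countable family of all sets $G_{m,m'}\cap W_n$ is then an external base of $K$: given $x\in K$ and open $U\ni x$, pick $m,m'$ with $x\in B_m\subseteq\mathrm{cl}_K(B_m)\subseteq B_{m'}\subseteq U\cap K$, then pick $n$ with $W_n\subseteq U\cup H_{m,m'}$ (possible since $K\subseteq U\cup H_{m,m'}$); the set $G_{m,m'}\cap W_n$ contains $x$ and misses $H_{m,m'}$, hence lies in $U$. So each approach buys something: yours is cleaner given the quoted characterization, while the direct argument is elementary and non-circular.
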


\begin{lemma}\label{l1}
Let $f:X\rightarrow Y$ be a sequence-covering and boundary-compact
map. If $X\in {\it \Omega}$, then $Y$ is $snf$-countable.
\end{lemma}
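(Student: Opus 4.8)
The plan is to construct, for each $y\in Y$, a countable $sn$-network $\mathcal{P}_y$ at $y$; the union $\bigcup_{y\in Y}\mathcal{P}_y$ then witnesses that $Y$ is $snf$-countable. If $y$ is isolated I take $\mathcal{P}_y=\{\{y\}\}$, so the real work occurs at non-isolated points. Observe first that every singleton is compact, so applying Lemma~\ref{l0} to one-point sets shows that $X\in\Omega$ is first-countable; this fact is needed to dispose of a degenerate case later.

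Fix a non-isolated $y$ and set $C=\partial f^{-1}(y)$, which is compact because $f$ is boundary-compact. Assume first that $C\neq\emptyset$. By Lemma~\ref{l0}, $C$ has a countable external base $\mathcal{N}$ in $X$, and I would enlarge $\mathcal{N}$ by adjoining all finite unions and all finite intersections of its members: this keeps $\mathcal{N}$ countable, keeps every member open, and preserves the external-base property. Define
\[
\mathcal{P}_y=\{\,f(V):V\in\mathcal{N},\ C\subset V\,\}.
\]
This family is countable, and it is nonempty because a finite subcover of the compact set $C$ drawn from $\mathcal{N}$ (which exists by the external-base property) has its union in $\mathcal{N}$.

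I would then verify the three defining properties of an $sn$-network. For the network property, given open $G\ni y$ the set $f^{-1}(G)$ is an open neighborhood of the compact set $C$, so the external base together with compactness and closure under finite unions yields $V\in\mathcal{N}$ with $C\subset V\subset f^{-1}(G)$, whence $f(V)\in\mathcal{P}_y$ and $f(V)\subset G$ by surjectivity; moreover $y\in f(V)$ for every admissible $V$ since $C\neq\emptyset$. Property (a) follows from closure under finite intersections, because $f(V_1\cap V_2)\subset f(V_1)\cap f(V_2)$ and $C\subset V_1\cap V_2$. The crucial point is that each $f(V)$ with $C\subset V$ open is a \emph{sequential} neighborhood of $y$: given $\{y_n\}$ converging to $y$, sequence-covering supplies $\{x_n\}$ converging to some $x$ with $x_n\in f^{-1}(y_n)$, and continuity forces $x\in f^{-1}(y)=C\cup\operatorname{int}f^{-1}(y)$; if $x\in C\subset V$ then $x_n\in V$ eventually so $y_n\in f(V)$ eventually, while if $x\in\operatorname{int}f^{-1}(y)$ then $y_n=y\in f(V)$ eventually. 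This verification, where sequence-covering and compactness of the boundary interact, is the step I expect to be the main obstacle.

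Finally, the degenerate case $C=\emptyset$ must be handled. Then $f^{-1}(y)$ is clopen, and sequence-covering forces every sequence converging to $y$ to be eventually equal to $y$: a lift $\{x_n\}$ converges to a point of the open set $f^{-1}(y)$, hence lies in $f^{-1}(y)$ eventually, so $y_n=f(x_n)=y$ eventually. Consequently every set containing $y$ is a sequential neighborhood of $y$. Choosing any $x_0\in f^{-1}(y)$ and a decreasing countable neighborhood base $\{B_n\}$ at $x_0$ (available since $X$ is first-countable), the family $\{\,f(B_n):n\in\mathbb{N}\,\}$ is a countable decreasing network at $y$ whose members are all sequential neighborhoods, hence a countable $sn$-network at $y$. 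Combining the two cases completes the construction and shows that $Y$ is $snf$-countable.
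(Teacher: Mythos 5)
Your proof is correct and takes essentially the same route as the paper: apply Lemma~\ref{l0} to get a countable external base of the compact set $\partial f^{-1}(y)$, and take images of open sets containing $\partial f^{-1}(y)$ (finite unions from the base) as the countable $sn$-network at $y$, using sequence-covering for the sequential-neighborhood property. In fact you are slightly more careful than the paper at two points it glosses over, namely the degenerate case $\partial f^{-1}(y)=\emptyset$ and the possibility that the lifted sequence converges into $\mbox{int}\,f^{-1}(y)$ rather than to a point of the boundary.
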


\begin{proof}
Let $y$ be a non-isolated point for $Y$. Then $\partial f^{-1}(y)$
is non-empty and compact for $X$. Therefore, $\partial f^{-1}(y)$
has a countably external base $\mathcal{U}$ in $X$ by
Lemma~\ref{l0}. Let
$$\mathcal{V}=\{\cup\mathcal{F}:\mbox{There is a finite subfamily}\
\mathcal{F}\subset \mathcal{U}\ \mbox{with}\ \partial
f^{-1}(y)\subset\cup\mathcal{F}\}.$$ Obviously, $\mathcal{V}$ is
countable. We now prove that $f(\mathcal{V})$ is a countable
$sn$-network at point $y$.

(1) $f(\mathcal{V})$ is a network at $y$.

Let $y\in U$. Obviously, $\partial f^{-1}(y)\subset f^{-1}(U)$. For
each $x\in\partial f^{-1}(y)$, there exist an $U_{x}\in\mathcal{U}$
such that $x\in U_{x}\subset f^{-1}(U)$. Therefore, $\partial
f^{-1}(y)\subset\cup\{U_{x}: x\in\partial f^{-1}(y)\}$. Since
$\partial f^{-1}(y)$ is compact, it follows that there exists a
finite subfamily $\mathcal{F}\subset\{U_{x}: x\in\partial
f^{-1}(y)\}$ such that $\partial
f^{-1}(y)\subset\cup\mathcal{F}\subset f^{-1}(U)$. It is easy to see
that $F\in\mathcal{V}$ and $y\in\cup f(\mathcal{F})\subset U$.

(2) For any $P_{1}, P_{2}\in f(\mathcal{V})$, there exists a
$P_{3}\in f(\mathcal{V})$ such that $P_{3}\subset P_{1}\cap P_{2}$.

It is obvious that there exist $V_{1}, V_{2}\in \mathcal{V}$ such
that $f(V_{1})=P_{1}, f(V_{2})=P_{2}$, respectively. Since $\partial
f^{-1}(y)\subset V_{1}\cap V_{2}$, it follows from the similar proof
of (1) that there exists a $V_{3}\in\mathcal{V}$ such that $\partial
f^{-1}(y)\subset V_{3}\subset V_{1}\cap V_{2}$. Let
$P_{3}=f(V_{3})$. Hence $P_{3}\subset f(V_{1}\cap V_{2})\subset
f(V_{1})\cap f(V_{2})=P_{1}\cap P_{2}$.

(3) For each $P\in f(\mathcal{V})$, $P$ is a sequential neighborhood
of $y$.

Let $\{y_{n}\}$ be any sequence in $Y$ which converges to $y$ in
$Y$. Since $f$ is a sequence-covering map, $\{y_{n}\}$ is the image
of some sequence $\{x_{n}\}$ converging to $x\in\partial
f^{-1}(y)\subset X$. It follows from $P\in f(\mathcal{V})$ that
there exists a $V\in \mathcal{V}$ such that $P=f(V)$. Therefore,
$\{x_{n}\}$ is eventually in $V$, and this is implied that
$\{y_{n}\}$ is eventually in $P$.

Therefore, $f(\mathcal{V})$ is a countable $sn$-network at point
$y$.
\end{proof}

\begin{theorem}\label{t0}
Let $f:X\rightarrow Y$ be a sequence-covering and boundary-compact
map. If $X\in {\it \Omega}$, then $f$ is an 1-sequence-covering map.
\end{theorem}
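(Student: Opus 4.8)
The plan is to combine Lemma~\ref{l1}, Lemma~\ref{l5}, and Lemma~\ref{l0} and then run essentially the construction from the necessity half of Theorem~\ref{t7}. First I would invoke Lemma~\ref{l1}: since $X\in{\it\Omega}$ and $f$ is sequence-covering and boundary-compact, $Y$ is $snf$-countable, so there is an $sn$-network $\mathcal{P}=\cup\{\mathcal{P}_y:y\in Y\}$ with each $\mathcal{P}_y$ countable and closed under finite intersections. To show $f$ is $1$-sequence-covering I must produce, for each $y\in Y$, a point of $f^{-1}(y)$ with the sequence-lifting property. Isolated points are handled trivially, since any sequence converging to such a $y$ is eventually constant equal to $y$, so any chosen $x\in f^{-1}(y)$ works.

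Next fix a non-isolated point $y$. Since $Y$ is now known to be $snf$-countable, Lemma~\ref{l5} applies and yields a point $x_y\in\partial f^{-1}(y)$ such that for every open $U$ with $x_y\in U$ there is $P\in\mathcal{P}_y$ with $P\subset f(U)$. The decisive step is to equip $x_y$ with a countable neighborhood base in $X$, even though $X$ need not be first-countable. Here I would use that $\partial f^{-1}(y)$ is compact and $X\in{\it\Omega}$: by Lemma~\ref{l0} the compact set $\partial f^{-1}(y)$ has a countable external base $\mathcal{U}$ in $X$, and by the very definition of an external base the subfamily $\{U\in\mathcal{U}:x_y\in U\}$ is a countable neighborhood base at $x_y$ in $X$. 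Taking finite intersections I may assume it lists as a decreasing sequence $\{B_n:n\in\mathbb{N}\}$ of open neighborhoods of $x_y$.

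With such a base in hand, the remainder copies the necessity argument of Theorem~\ref{t7}. Given an arbitrary sequence $\{y_n\}$ converging to $y$, apply Lemma~\ref{l5} to each $B_n$ to get $P_n\in\mathcal{P}_y$ with $P_n\subset f(B_n)$; since every member of $\mathcal{P}_y$ is a sequential neighborhood of $y$, so is each $f(B_n)$, and hence there are indices $1<i_1<i_2<\cdots$ with $y_i\in f(B_n)$ whenever $i\ge i_n$. Choosing $x_j\in f^{-1}(y_j)$ for $j<i_1$ and $x_j\in f^{-1}(y_j)\cap B_n$ for $i_n\le j<i_{n+1}$, the set $\{x_j:j\in\mathbb{N}\}$ converges to $x_y$ (because the $B_n$ form a decreasing neighborhood base of $x_y$) and satisfies $f(\{x_j\})=\{y_n\}$. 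This establishes that $f$ is a $1$-sequence-covering map.

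The only genuinely new ingredient compared with Theorem~\ref{t7} is the second paragraph, and I expect the main (though not deep) obstacle to be the realization that global first-countability of $X$ is never required: one needs first-countability of $X$ only at the single lifting point $x_y$, and $X\in{\it\Omega}$ delivers exactly this, because $x_y$ lies inside the compact set $\partial f^{-1}(y)$ whose countable external base restricts to a countable local base at $x_y$.
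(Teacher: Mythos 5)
Your proof is correct, and its skeleton is the paper's: Lemma~\ref{l1} to make $Y$ $snf$-countable, followed by the lifting construction from the necessity half of Theorem~\ref{t7}. Where you diverge is in how you supply a countable neighborhood base at the lifting point $x_{y}$, and the motivation you give there rests on a misconception: you write that ``$X$ need not be first-countable,'' but every space in ${\it \Omega}$ \emph{is} first-countable, since each singleton $\{x\}$ is a compact subset and the defining property of ${\it \Omega}$ then gives $\{x\}$ --- that is, the point $x$ --- a countable neighborhood base in $X$. This is exactly why the paper's proof is a two-line reduction: Lemma~\ref{l1} yields that $Y$ is $snf$-countable, and Theorem~\ref{t7} applies verbatim because its first-countability hypothesis is automatic for $X\in{\it \Omega}$. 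Your workaround --- extracting a countable local base at $x_{y}$ from the countable external base of the compact set $\partial f^{-1}(y)$ provided by Lemma~\ref{l0}, noting that the members of an external base containing $x_{y}$ form a local base at $x_{y}$ --- is nonetheless valid, and the rest of your argument (Lemma~\ref{l5} applied to each $B_{n}$, the observation that each $f(B_{n})$ is a sequential neighborhood of $y$, the choice of indices $i_{n}$ and of the lifts $x_{j}$) reproduces the proof of Theorem~\ref{t7} faithfully, including the trivial isolated-point case that the paper leaves implicit. What your detour buys is a formally stronger statement in the spirit of Lemma~\ref{l6} and Theorem~\ref{t1}: your argument only uses that each $\partial f^{-1}(y)$ is compact with a countable external base in $X$, rather than the full strength of $X\in{\it \Omega}$; the price is rewriting a page of construction that the paper obtains for free by citation.
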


\begin{proof}
From Lemma~\ref{l1} it follows that $Y$ is $snf$-countable.
Therefore, $f$ is an 1-sequence-covering map by Theorem~\ref{t7}.
\end{proof}

By Theorem~\ref{t0}, it easily follows the following
Corollary~\ref{c0}, which gives an affirmative answer for Question
1.1.

\begin{corollary}\label{c0}
Let $f:X\rightarrow Y$ be a sequence-covering and boundary-compact
map. Suppose also that at least one of the following conditions
holds:
\begin{enumerate}
\item $X$ has a point-countable base;

\item $X$ is a developable space.
\end{enumerate}
Then $f$ is
an 1-sequence-covering map.
\end{corollary}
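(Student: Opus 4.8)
The plan is to derive the corollary directly from Theorem~\ref{t0}. That theorem already assumes $f$ to be sequence-covering and boundary-compact and concludes that $f$ is $1$-sequence-covering under the single additional hypothesis $X\in{\it \Omega}$. Since our $f$ is sequence-covering and boundary-compact by assumption, the whole task reduces to verifying that each of the two listed conditions forces $X\in{\it \Omega}$; once this membership is in hand, Theorem~\ref{t0} gives the conclusion with no further argument.

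To carry this out I would recall that $X\in{\it \Omega}$ means precisely that every compact subset $K\subset X$ is metrizable and has a countable neighborhood base in $X$, and then check this in the two cases. If $X$ has a point-countable base, I would fix a compact $K\subset X$ and use the point-countability together with the compactness of $K$ to extract a countable subfamily of the base that traces out a base on $K$ and whose suitable finite unions form a countable neighborhood base of $K$ in $X$; this makes $K$ second-countable, hence metrizable, and confirms the neighborhood-base condition. If instead $X$ is developable with development $\{\mathcal{G}_n\}$, I would argue analogously using stars: the sets $\mathrm{St}(K,\mathcal{G}_n)$ furnish a countable neighborhood base of $K$, while the trace of the development on $K$ yields metrizability. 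Both verifications are exactly the remark recorded just before Lemma~\ref{l0}, with the references \cite{AB} and \cite{TV}, so in practice I would cite that observation rather than reprove it.

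With $X\in{\it \Omega}$ established in either case, the final step is a single appeal to Theorem~\ref{t0}, which immediately yields that $f$ is an $1$-sequence-covering map. I do not expect any real obstacle, since the substantive work has already been done upstream: Lemma~\ref{l1} builds a countable $sn$-network on $Y$ out of a countable external base of $\partial f^{-1}(y)$ (itself supplied by Lemma~\ref{l0}), and Theorem~\ref{t7} upgrades the resulting $snf$-countability of $Y$ to the $1$-sequence-covering property. The only point needing attention is the class membership $X\in{\it \Omega}$, and that is standard; the corollary is best viewed as a packaging of Theorem~\ref{t0} for the two most familiar classes of generalized metric spaces covered by ${\it \Omega}$.
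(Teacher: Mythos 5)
Your proposal is correct and coincides with the paper's own argument: the paper derives Corollary~\ref{c0} immediately from Theorem~\ref{t0}, using the remark (with references \cite{AB} and \cite{TV}) that a developable space or a space with a point-countable base belongs to ${\it \Omega}$. Your extra sketches of why each condition implies $X\in{\it \Omega}$ are sound but not needed beyond citing that observation, exactly as you note.
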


\begin{lemma}\label{l6}
Let $f:X\rightarrow Y$ be a sequence-covering map, where $Y$ is
$snf$-countable and $\partial f^{-1}(y)$ has a countably external
base for each point $y\in Y$. Then, for each non-isolated point
$y\in Y$, there exists a point $x_{y}\in\partial f^{-1}(y)$ such
that whenever $U$ is an open subset with $x_{y}\in U$, there exists
a $P\in\mathcal{P}_{y}$ satisfying $P\subset f(U)$
\end{lemma}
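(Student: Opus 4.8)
The plan is to argue by contradiction, following closely the proof of Lemma~\ref{l5} but replacing the use of compactness of $\partial f^{-1}(y)$ by the countably external base. Suppose the conclusion fails for some non-isolated $y\in Y$: for every $x\in\partial f^{-1}(y)$ there is an open neighborhood $U_{x}$ of $x$ with $P\not\subseteq f(U_{x})$ for all $P\in\mathcal{P}_{y}$. Let $\mathcal{N}$ be a countably external base of $\partial f^{-1}(y)$ in $X$. For each such $x$, the external-base property yields some $N\in\mathcal{N}$ with $x\in N\subseteq U_{x}$, and then $f(N)\subseteq f(U_{x})$ forces $P\not\subseteq f(N)$ for every $P\in\mathcal{P}_{y}$. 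Collecting these, I set $\mathcal{N}'=\{N\in\mathcal{N}:P\not\subseteq f(N)\ \text{for all}\ P\in\mathcal{P}_{y}\}$. Then $\mathcal{N}'$ is countable and covers $\partial f^{-1}(y)$; write $\mathcal{N}'=\{N_{m}:m\in\mathbb{N}\}$.

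As in Lemma~\ref{l5}, I would enumerate $\mathcal{P}_{y}=\{P_{n}:n\in\mathbb{N}\}$ and put $F_{n}=\bigcap_{i=1}^{n}P_{i}$, so that $F_{n+1}\subseteq F_{n}$, each $F_{n}\in\mathcal{P}_{y}$, and $\{F_{n}\}$ is a decreasing network at $y$. Since every $N_{m}\in\mathcal{N}'$ satisfies $F_{n}\not\subseteq f(N_{m})$, I can choose a point $x_{n,m}\in F_{n}\setminus f(N_{m})$ for all $n,m\in\mathbb{N}$.

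The key difference from the compact case is that the cover $\mathcal{N}'$ is merely countable rather than finite, so the finite cyclic listing used in Lemma~\ref{l5} must be replaced by a two-dimensional enumeration of the double-indexed family $\{x_{n,m}\}$. I would list the points in triangular blocks, the $n$-th block being $x_{n,1},x_{n,2},\dots,x_{n,n}$, and let $\{y_{k}\}$ be the resulting single sequence. This enumeration has two features I need simultaneously: the first index $n$ tends to infinity along the sequence, so that, because $x_{n,m}\in F_{n}$ and $\{F_{n}\}$ is a decreasing network at $y$, the sequence $\{y_{k}\}$ converges to $y$; and, for each fixed $m$, the pair $(n,m)$ occurs for all $n\geq m$, hence infinitely often. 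Balancing these two requirements against each other is the main obstacle of the argument, and the triangular enumeration is precisely what makes both hold at once.

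Finally I would apply sequence-covering: $\{y_{k}\}$ lifts to a sequence $\{x_{k}\}$ converging to some $x\in\partial f^{-1}(y)$ with $f(x_{k})=y_{k}$. Since $\mathcal{N}'$ covers $\partial f^{-1}(y)$, there is $m_{0}$ with $x\in N_{m_{0}}$, and openness of $N_{m_{0}}$ gives $x_{k}\in N_{m_{0}}$ eventually, hence $y_{k}\in f(N_{m_{0}})$ eventually. But the chosen enumeration makes $y_{k}=x_{n,m_{0}}\in F_{n}\setminus f(N_{m_{0}})$ for infinitely many $k$, so $y_{k}\notin f(N_{m_{0}})$ infinitely often, which is the desired contradiction. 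This completes the proof.
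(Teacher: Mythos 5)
Your proposal is correct and follows essentially the same route as the paper's own proof: the same contradiction setup, the same use of the countably external base to extract a countable family $\{B_m\}$ (your $\mathcal{N}'=\{N_m\}$) refining the neighborhoods $U_x$, the same points $x_{n,m}\in F_n\setminus f(B_m)$, and the same triangular enumeration (the paper writes $y_k=x_{n,m}$ with $k=m+n(n-1)/2$ for $n\geq m$, which is exactly your block listing) so that $\{y_k\}$ converges to $y$ while each column index $m$ recurs infinitely often. The final lifting and contradiction are identical, so there is nothing to add.
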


\begin{proof}
Suppose not, there exists a non-isolated point $y\in Y$ such that
for every point $x\in
\partial f^{-1}(y)$, there is an open neighborhood $U_{x}$ of $x$
such that $P\not\subseteq f(U_{x})$ for every $P\in
\mathcal{P}_{y}$. Let $\mathcal{B}$ be a countably external base for
$\partial f^{-1}(y)$. Therefore, for each $x\in \partial f^{-1}(y)$,
there exists a $B_{x}\in \mathcal{B}$ such that $x\in B_{x}\subset
U_{x}$. For each $x\in
\partial f^{-1}(y)$, it follows that $P\not\subseteq f(B_{x})$
whenever $P\in \mathcal{P}_{y}$. Assume that
$\mathcal{P}_{y}=\{P_{n}:n\in\mathbb{N}\}$ and
$\mathcal{W}_{y}=\{F_{n}=\bigcap_{i=1}^{n}P_{i}:n\in\mathbb{N}\}$.
We denote $\{B_{x}\in\mathcal{B}:x\in\partial f^{-1}(y)\}$ by
$\{B_{m}:m\in\mathbb{N}\}$. For each $n, m\in\mathbb{N}$, it follows
that there exists $x_{n, m}\in F_{n}\setminus f(B_{m})$. For $n\geq
m$, we denote $y_{k}=x_{n, m}$ with $k=m+n(n-1)/2$. Since
$\mathcal{P}_{y}$ is a network at point $y$ and $F_{n+1}\subset
F_{n}$ for every $n\in \mathbb{N}$, $\{y_{k}\}$ is a sequence
converging to $y$ in $Y$. Because $f$ is a sequence-covering map,
$\{y_{k}\}$ is an image of some sequence $\{x_{k}\}$ converging to
$x\in \partial f^{-1}(y)$ in $X$. From $x\in \partial
f^{-1}(y)\subset\cup\{B_{m}: m\in \mathbb{N}\}$ it follows that
there exists a $m_{0}\in \mathbb{N}$ such that $B_{m_{0}}$ is an
open neighborhood at $x$. Therefore, $\{x\}\cup\{x_{k}: k\geq
k_{0}\}\subset B_{m_{0}}$ for some $k_{0}\in \mathbb{N}$. Hence
$\{y\}\cup\{y_{k}: k\geq k_{0}\}\subset f(B_{m_{0}})$. However, we
can choose a $k\geq k_{0}$ and an $n\geq m_{0}$ such that
$y_{k}=x_{n, m_{0}}$, which implies that $x_{n, m_{0}}\in
f(B_{m_{0}})$. This contradictions to $x_{n, m_{0}}\in
F_{n}\setminus f(B_{m_{0}})$.
\end{proof}

\begin{theorem}\label{t1}
Let $f:X\rightarrow Y$ be a sequence-covering and boundary-separable
map. If $X$ has a point-countable base and $Y$ is $snf$-countable,
then $f$ is an 1-sequence-covering map.
\end{theorem}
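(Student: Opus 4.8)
The plan is to reduce the theorem to Lemma~\ref{l6}, exactly as Theorem~\ref{t7} was reduced to Lemma~\ref{l5}; the only genuinely new ingredient is verifying that the hypotheses of Lemma~\ref{l6} are met in the present setting. First I would observe that a point-countable base is in particular a base, and since each point of $X$ lies in only countably many of its members, those members form a countable neighborhood base at that point; hence $X$ is first-countable. Thus every point $x_{y}$ we produce will automatically admit a decreasing countable neighborhood base $\{B_{n}\}$.

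The one substantive step is to show that $\partial f^{-1}(y)$ has a countable external base for each $y\in Y$. Let $\mathcal{B}$ be a point-countable base for $X$, set $A=\partial f^{-1}(y)$, and let $D=\{d_{n}:n\in\mathbb{N}\}$ be a countable dense subset of $A$, which exists since $f$ is boundary-separable. Put $\mathcal{N}=\{B\in\mathcal{B}:B\cap D\neq\emptyset\}$. Since $\mathcal{B}$ is point-countable, for each $d\in D$ only countably many members of $\mathcal{B}$ contain $d$, so $\mathcal{N}=\bigcup_{n}\{B\in\mathcal{B}:d_{n}\in B\}$ is a countable union of countable families and hence countable. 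To see that $\mathcal{N}$ is an external base of $A$, take $x\in A$ and an open $U$ with $x\in U$; choose $B\in\mathcal{B}$ with $x\in B\subset U$. Then $B\cap A$ is a nonempty relatively open subset of $A$, so by density it contains some $d_{n}$, whence $B\in\mathcal{N}$ while $x\in B\subset U$. Therefore $\partial f^{-1}(y)$ has a countable external base.

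With this established, Lemma~\ref{l6} applies, since its remaining hypotheses, namely that $f$ is sequence-covering and $Y$ is $snf$-countable, are assumed. It yields, for each non-isolated $y\in Y$, a point $x_{y}\in\partial f^{-1}(y)$ such that every open $U$ with $x_{y}\in U$ satisfies $P\subset f(U)$ for some $P\in\mathcal{P}_{y}$. From here the argument is verbatim the necessity part of Theorem~\ref{t7}: fix a decreasing countable neighborhood base $\{B_{n}\}$ at $x_{y}$, pick $P_{n}\in\mathcal{P}_{y}$ with $P_{n}\subset f(B_{n})$, and use that each $P_{n}$ is a sequential neighborhood of $y$ to conclude that each $f(B_{n})$ is one as well. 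Given $\{y_{n}\}$ converging to $y$, choose a strictly increasing sequence of indices $\{i_{n}\}$ with $y_{i}\in f(B_{n})$ for all $i\geq i_{n}$, and select $x_{j}\in f^{-1}(y_{j})\cap B_{n}$ for $i_{n}\leq j<i_{n+1}$ (and $x_{j}\in f^{-1}(y_{j})$ for $j<i_{1}$); the resulting sequence converges to $x_{y}$ and maps onto $\{y_{n}\}$. Hence $f$ is an 1-sequence-covering map.

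I do not expect a serious obstacle: once the external-base observation is made, the remainder is bookkeeping parallel to the already-proved Theorem~\ref{t7}. The only place demanding a little care is confirming the countability of $\mathcal{N}$, which is precisely where point-countability of the base is used, and which explains why separability of the fiber boundaries, rather than their compactness, is enough to drive the argument.
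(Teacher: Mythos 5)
Your proposal is correct and takes essentially the same route as the paper: the paper's own proof of Theorem~\ref{t1} simply remarks that each $\partial f^{-1}(y)$ ``obviously'' has a countable external base and then invokes Lemma~\ref{l6} together with the argument of Theorem~\ref{t7}. You have merely filled in the two details left implicit there --- the countable external base obtained from a countable dense subset of the fiber boundary and the point-countable base, and the first-countability of $X$ needed to run the Theorem~\ref{t7} construction --- and both are verified correctly.
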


\begin{proof}
Obviously, $\partial f^{-1}(y)$ has a countably external base for
each point $y\in Y$. Therefore, it is easy to see by Lemma~\ref{l6}
and the proof of Theorem~\ref{t7}.
\end{proof}

{\bf Remark} We can't omit the condition ``$Y$ is $snf$-countable''
in Theorem~\ref{t1}. Indeed, the sequence fan
$S_{\omega}$\footnote{$S_{\omega}$ is the space obtained from the
topological sum of $\omega$ many copies of the convergent sequence
by identifying all the limit points to a point.} is the image of
metric spaces under the sequence-covering $s$-maps by
\cite[Corollary 2.4.4]{Ls3}. However, $S_{\omega}$ is not
$snf$-countable, and therefore, $S_{\omega}$ is not the image of
metric spaces under an 1-sequence-covering map.

In this section, we finally give an affirmative answer for Question
1.2.

\begin{lemma}\cite{BS}\label{l2}
Let $f:X\rightarrow Y$ be a map. If $X$ is a Fr\'echet
space\footnote{$X$ is said to be a {\it Fr\'echet space}\cite{Fr} if
$x\in\overline{P}\subset X$, there is a sequence in $P$ converging
to $x$ in $X$.}, then $f$ is a pseudo-open map\footnote{$f$ is a
{\it pseudo-open map}\cite{Ar2} if whenever $f^{-1}(y)\subset U$
with $U$ open in $X$, then $y\in\mbox{Int}(f(U))$.} if and only if
$Y$ is a Fr\'echet space and $f$ is a sequentially quotient map.
\end{lemma}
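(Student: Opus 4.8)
The plan is to prove the two implications separately, observing that the \emph{sufficiency} uses only that $Y$ is Fr\'echet together with the sequentially quotient property, while the \emph{necessity} uses only that $X$ is Fr\'echet together with pseudo-openness. For the sufficiency I would argue directly: assume $Y$ is Fr\'echet and $f$ is sequentially quotient, let $f^{-1}(y)\subset U$ with $U$ open in $X$, and suppose toward a contradiction that $y\notin\mbox{Int}(f(U))$, so $y\in\overline{Y\setminus f(U)}$. Since $Y$ is Fr\'echet, pick a sequence $\{y_{n}\}$ in $Y\setminus f(U)$ with $y_{n}\to y$. The sequentially quotient hypothesis produces a convergent sequence $\{x_{k}\}$ in $X$, say $x_{k}\to x$, with $x_{k}\in f^{-1}(y_{n_{k}})$; by continuity $f(x_{k})=y_{n_{k}}\to f(x)$, and since $y_{n_{k}}\to y$ this forces $f(x)=y$, so $x\in f^{-1}(y)\subset U$. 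As $U$ is open and $x_{k}\to x$, eventually $x_{k}\in U$, whence $y_{n_{k}}=f(x_{k})\in f(U)$, contradicting $y_{n_{k}}\in Y\setminus f(U)$. Hence $y\in\mbox{Int}(f(U))$ and $f$ is pseudo-open.

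For the necessity, assume $X$ is Fr\'echet and $f$ is pseudo-open. The conceptual heart is the closure lemma: if $A\subset Y$ and $y\in\overline{A}$, then $f^{-1}(y)\cap\overline{f^{-1}(A)}\neq\emptyset$. I would prove its contrapositive: if $f^{-1}(y)\cap\overline{f^{-1}(A)}=\emptyset$, then $f^{-1}(y)$ lies in the open set $V=X\setminus\overline{f^{-1}(A)}$, and since $x\in V$ gives $f(x)\notin A$ we have $f(V)\cap A=\emptyset$; pseudo-openness then exhibits the open neighborhood $\mbox{Int}(f(V))$ of $y$ that misses $A$, so $y\notin\overline{A}$. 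Granting this, Fr\'echetness of $Y$ follows immediately: for $y\in\overline{A}$ choose $x\in f^{-1}(y)\cap\overline{f^{-1}(A)}$, use that $X$ is Fr\'echet to get a sequence in $f^{-1}(A)$ converging to $x$, and push it forward by $f$ to get a sequence in $A$ converging to $y$.

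To obtain that $f$ is sequentially quotient, let $y_{n}\to y$ in $Y$. If $y_{n}=y$ for infinitely many $n$, the constant lift at any point of $f^{-1}(y)$ along that subsequence works, so I may assume $A=\{y_{n}:y_{n}\neq y\}$ is infinite, whence $y\in\overline{A}$. The closure lemma gives $x\in f^{-1}(y)\cap\overline{f^{-1}(A)}$, and Fr\'echetness of $X$ yields a sequence $\{z_{j}\}$ in $f^{-1}(A)$ with $z_{j}\to x$; writing $f(z_{j})=y_{m_{j}}$, continuity forces $y_{m_{j}}\to y$. The remaining point, which I expect to be the main obstacle, is the index bookkeeping: I must arrange the $m_{j}$ into a strictly increasing subsequence. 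Here the degenerate-case analysis is essential—if the $m_{j}$ were bounded, some value would recur infinitely often, and the corresponding subsequence of $\{z_{j}\}$ still converging to $x$ would force $y_{m_{j}}=y$, contradicting $y_{m_{j}}\in A$. Thus the $m_{j}$ are unbounded, a strictly increasing subsequence can be extracted, and the resulting lift gives the sequentially quotient property, completing the proof.
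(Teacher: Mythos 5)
Your proof is correct. Note that the paper itself offers no proof of this lemma --- it is quoted from Boone and Siwiec \cite{BS} --- and your argument is essentially the standard one from that source: the sufficiency via lifting a sequence from $Y\setminus f(U)$, and the necessity via the closure lemma $y\in\overline{A}\Rightarrow f^{-1}(y)\cap\overline{f^{-1}(A)}\neq\emptyset$ plus Fr\'echetness of $X$. The only points you gloss are harmless under the paper's standing Hausdorff assumption: the identification $f(x)=y$ of limits (uniqueness of limits), and the fact that a value $z\neq y$ can occur only finitely often in a sequence converging to $y$, which is what guarantees both that $A=\{y_{n}:y_{n}\neq y\}$ is infinite with $y\in\overline{A}$ and that your boundedness argument for the indices $m_{j}$ yields the contradiction. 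Your index bookkeeping correctly produces the strictly increasing subsequence $\{n_{k}\}$ that the definition of a sequentially quotient map requires, so nothing is missing.
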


\begin{theorem}
Let $f:X\rightarrow Y$ be a boundary-compact map. If $X\in {\it
\Omega}$, then $f$ is a sequentially quotient map if and only if it
is a pseudo-sequence-covering map.
\end{theorem}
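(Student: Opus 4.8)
The plan is to prove the two implications separately, handling the easy direction first. Suppose $f$ is pseudo-sequence-covering and let $\{y_n\}$ converge to $y$ in $Y$. Writing $L=\{y_n\}$, there is a compact $K\subset X$ with $f(K)=\overline{L}=\{y\}\cup\{y_n:n\in\mathbb{N}\}$. Since $X\in{\it \Omega}$, the compact set $K$ is metrizable, hence sequentially compact. Choosing $x_n\in K\cap f^{-1}(y_n)$ for each $n$ and passing to a convergent subsequence $x_{n_k}\to x\in K$, continuity gives $f(x)=\lim_k y_{n_k}=y$, so $\{x_{n_k}\}$ converges in $X$ with $f(x_{n_k})=y_{n_k}$. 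This is exactly the sequentially quotient property (the trivial, eventually-constant case being handled by a finite $K$).

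For the forward direction, fix a nontrivial convergent sequence $L=\{y_n\}\to y$, so I may assume $y_n\ne y$. First I note $\partial f^{-1}(y)\ne\emptyset$: applying the sequentially quotient property to $\{y_n\}$ produces $x_k\to x$ with $f(x_k)=y_{n_k}\ne y$ and $f(x)=y$, and since $f^{-1}(y)$ is closed while the $x_k$ lie outside it, $x\in\partial f^{-1}(y)$. By hypothesis $\partial f^{-1}(y)$ is compact, so Lemma~\ref{l0} supplies a countable external base; taking finite unions that still cover $\partial f^{-1}(y)$ (as in the proof of Lemma~\ref{l1}) and then finite intersections, I obtain a decreasing sequence $V_1\supseteq V_2\supseteq\cdots$ of open sets forming a neighborhood base of the compact set $\partial f^{-1}(y)$, in the sense that every open $U\supseteq\partial f^{-1}(y)$ contains some $V_m$.

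The crux, which I expect to be the main obstacle, is the claim that for every open $V\supseteq\partial f^{-1}(y)$ the set $f(V)$ is a sequential neighborhood of $y$. To see this, suppose some sequence $z_i\to y$ avoids $f(V)$; the sequentially quotient property yields $w_k\to w$ with $f(w_k)=z_{i_k}\notin f(V)$, so $f(w)=y$, the $w_k$ lie outside the closed fiber $f^{-1}(y)$, and hence $w\in\partial f^{-1}(y)\subseteq V$. Then $w_k\in V$ eventually, forcing $z_{i_k}\in f(V)$, a contradiction. Consequently each $f(V_m)$ contains $y_n$ for all large $n$, so I may pick an increasing sequence $N_1<N_2<\cdots$ and choose $x_n\in f^{-1}(y_n)\cap V_m$ whenever $N_m\le n<N_{m+1}$, with arbitrary preimages for $n<N_1$.

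Finally I set $K=\partial f^{-1}(y)\cup\{x_n:n\in\mathbb{N}\}$, so that $f(K)=\{y\}\cup\{y_n\}=\overline{L}$. Compactness follows from the neighborhood-base structure: given an open cover, finitely many members cover the compact set $\partial f^{-1}(y)$, their union $G$ contains some $V_{m_0}$, and by construction all but finitely many $x_n$ lie in $V_{m_0}\subseteq G$, while the remaining finitely many points are absorbed by finitely many further members. Thus $K$ is compact with $f(K)=\overline{L}$, and $f$ is pseudo-sequence-covering. The two delicate points are the sequential-neighborhood claim, where the sequentially quotient property and the closedness of fibers must be combined so that limit points are forced into $\partial f^{-1}(y)$, and the verification of compactness of $K$ purely from the external base furnished by ${\it \Omega}$.
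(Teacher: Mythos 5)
Your proof is correct, and both its easy direction and its endgame coincide with the paper's: the paper also builds a decreasing countable neighborhood base of the compact set $\partial f^{-1}(y)$ from $X\in{\it \Omega}$, picks $x_{j}\in f^{-1}(y_{j})\cap V_{n}$ once the images eventually capture the $y_{j}$, and takes $K=\partial f^{-1}(y)\cup\{x_{j}:j\in\mathbb{N}\}$. Where you genuinely diverge is the justification of the crucial step that $y_{j}\in f(V_{n})$ eventually. The paper restricts $f$ to $X_{1}=f^{-1}(\overline{L})$ and invokes the Boone--Siwiec characterization (Lemma~\ref{l2}) to conclude that the restriction $g$ is pseudo-open --- this works because $X\in{\it \Omega}$ is first-countable (singletons are compact, so they have countable neighborhood bases), hence $X_{1}$ is Fr\'echet --- and then reads off $y_{0}\in\mathrm{Int}(g(V_{n}))$, where the $V_{n}$ are enlarged by $\mathrm{Int}(g^{-1}(y_{0}))$ so as to be neighborhoods of the whole fiber, as pseudo-openness requires. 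You instead prove directly that $f(V)$ is a sequential neighborhood of $y$ for every open $V\supseteq\partial f^{-1}(y)$: a sequentially quotient lift of a sequence avoiding $f(V)$ consists of points outside the closed fiber $f^{-1}(y)$ yet converges to a point of $f^{-1}(y)$, which is therefore in $\partial f^{-1}(y)\subseteq V$, a contradiction. Your route buys self-containedness: it bypasses the subspace restriction and Lemma~\ref{l2} entirely, needs only sequential quotientness plus Hausdorffness rather than the Fr\'echet machinery, and it makes explicit two points the paper leaves implicit, namely that $\partial f^{-1}(y)\neq\emptyset$ for a nontrivial convergent sequence and that compactness of $K$ really does follow from the outer-base property of the $V_{m}$. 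What the paper's approach buys in exchange is brevity and a reusable intermediate fact (restrictions of sequentially quotient boundary-compact maps over convergent sequences are pseudo-open), which is of independent interest in this circle of results.
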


\begin{proof}
First, suppose that $f$ is sequentially quotient. If $\{y_{n}\}$ is
a non-trivial sequence converging to $y_{0}$ in $Y$, put
$S_{1}=\{y_{0}\}\cup \{y_{n}:n\in \mathbb{N}\},\
X_{1}=f^{-1}(S_{1})$ and $g=f|_{X_{1}}$. Thus $g$ is a sequentially
quotient,\ boundary compact map.\ So $g$ is a pseudo-open map by
Lemma~\ref{l2}. Since $X\in {\it \Omega}$, let
$\{U_{n}\}_{n\in\mathbb{N}}$ be a decreasingly neighborhood base of
compact subset $\partial g^{-1}(y_{0})$ in $X_{1}$. Thus
$\{U_{n}\cup\mbox{Int}(g^{-1}(y_{0}))\}_{n\in\mathbb{N}}$ is a
decreasingly neighborhood base of $g^{-1}(y_{0})$ in $X_{1}$. Let
$V_{n}=U_{n}\cup\mbox{Int}(g^{-1}(y_{0}))$ for each $n\in
\mathbb{N}$. Then $y_{0}\in\mbox{Int}(g(V_{n}))$, thus there exists
an $i_{n}\in \mathbb{N}$ such that $y_{i}\in g(V_{n})$ for each
$i\geq i_{n}$, so $g^{-1}(y_{i})\cap V_{n}\neq \emptyset$. We can
suppose that $1<i_{n}<i_{n+1}$. For each $j\in \mathbb{N}$, we take
\[x_{j}\in\left\{
\begin{array}{lll}
f^{-1}(y_{j}), & \mbox{if } j<i_{1},\\
f^{-1}(y_{j})\cap V_{n}, & \mbox{if } i_{n}\leq
j<i_{n+1}.\end{array}\right.\] Let $K=\partial g^{-1}(y_{0})\cup
\{x_{j}:j\in\mathbb{N}\}$. Clearly, $K$ is a compact subset in
$X_{1}$ and $g(K)=S_{1}$. Thus $f(K)=S_{1}$. Therefore, $f$ is a
pseudo-sequence-covering map.

Conversely, suppose that $f$ is a pseudo-sequence-covering map. If
$\{y_n\}$ is a convergent sequence in $Y$, then there is a compact
subset $K$ in $X$ such that $f(K)=\overline{\{y_n\}}$. For each
$n\in\mathbb{N}$, take a point $x_n\in f^{-1}(y_n)\cap K$. Since $K$
is compact and metrizable, $\{x_n\}$ has a convergent subsequence
$\{x_{n_k}\}$. So $f$ is sequentially quotient.
\end{proof}

\begin{corollary}
Let $f:X\rightarrow Y$ be a boundary-compact map. Suppose also that
at least one of the following conditions holds:
\begin{enumerate}
\item $X$ has a point-countable base;

\item $X$ is a developable space.
\end{enumerate}
Then $f$ is a
sequentially quotient map if and only if it is a
pseudo-sequence-covering map.
\end{corollary}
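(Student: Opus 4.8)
The plan is to obtain this Corollary as a direct specialisation of the preceding Theorem. Since that Theorem already establishes, for any boundary-compact map whose domain lies in the class ${\it \Omega}$, the equivalence between being sequentially quotient and being pseudo-sequence-covering, it suffices to verify that each of the two listed hypotheses on $X$ forces $X\in{\it \Omega}$. Once that membership is in place, the conclusion is immediate in both cases.

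First I would invoke the observation recorded at the beginning of Section~3: if $X$ has a point-countable base or $X$ is developable, then $X\in{\it \Omega}$ (see \cite{TV} and \cite{AB}, respectively). Concretely, the defining requirement of ${\it \Omega}$ is that every compact $K\subset X$ be metrizable and possess a countable neighbourhood base in $X$; for a point-countable base only countably many basic sets meet a fixed compact $K$, whose restrictions to $K$ give a countable base witnessing metrizability, and suitable finite unions of these basic sets furnish the countable outer base, while for a developable space one argues analogously by extracting finite subcovers from a development at each stage. In either situation the domain of $f$ therefore belongs to ${\it \Omega}$.

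With $X\in{\it \Omega}$ established under condition~(1) or condition~(2), I would simply apply the Theorem to the boundary-compact map $f:X\rightarrow Y$ and conclude that $f$ is sequentially quotient if and only if it is pseudo-sequence-covering. This settles both cases simultaneously and completes the proof.

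I expect essentially no obstacle here: the whole substance of the argument is carried by the Theorem, and this Corollary only records the two most familiar classes of generalized metric spaces to which it applies. The single step requiring any attention is the membership $X\in{\it \Omega}$, which is standard and was already flagged in the text; everything else is a verbatim appeal to the Theorem.
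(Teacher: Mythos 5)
Your proposal is correct and coincides with the paper's intent: the corollary is stated without proof precisely because it is the verbatim specialisation of the preceding theorem, using the observation from the beginning of Section~3 that a space with a point-countable base or a developable space belongs to ${\it \Omega}$ (via \cite{TV} and \cite{AB}, respectively). Your sketch of why these classes lie in ${\it \Omega}$ is a harmless extra; the citation-level appeal alone is all the paper relies on.
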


\begin{question}\label{q1}
Let $f:X\rightarrow Y$ be a sequence-covering and boundary-compact
(or compact) map. Is $f$ an 1-sequence-covering map if one of the
following conditions is satisfied?
\begin{enumerate}
\item Every compact subset of $X$ is metrizable;

\item Every compact subset of $X$ has countable character.
\end{enumerate}

\end{question}

{\bf Remark} If $X$ satisfies the conditions (1) and (2) in
Question~\ref{q1}, then $f$ is an 1-sequence-covering map by
Theorem~\ref{t0}.

\vskip 0.5cm
\section{Sequence-covering maps on $g$-metrizable spaces}
In this section, we mainly discuss sequence-covering maps on spaces
with a specially weak base.

\begin{lemma}\label{l3}
Let $f:X\rightarrow Y$ be a sequence-covering and boundary-compact
map. For each non-isolated point $y\in Y$, there exist a point
$x\in\partial f^{-1}(y)$ and a decreasingly weak neighborhood base
$\{B_{xi}\}_{i}$ at $x$ such that for each $n\in \mathbb{N}$, there
are a $P\in\mathcal{P}_{y}$ and $i\in \mathbb{N}$ with $P\subset
f(B_{xi})$ if $X$ and $Y$ satisfy the following (1) and (2):
\begin{enumerate}
\item $Y$ is $snf$-countable;

\item Every compact subset of $X$ has a countably externally weak base in $X$.
\end{enumerate}
\end{lemma}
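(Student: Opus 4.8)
The plan is to run the contradiction argument of Lemma~\ref{l6} with the open external base replaced by the externally weak base furnished by hypothesis~(2), and then to refine the weak neighborhood base it produces into a decreasing one without destroying the containment property. Throughout, fix a non-isolated point $y\in Y$. Since $f$ is boundary-compact, $K=\partial f^{-1}(y)$ is compact and, as $y$ is non-isolated, non-empty; so by~(2) it carries a countable externally weak base $\mathcal{B}$, and for $x\in K$ I write $\mathcal{B}_{x}=\{B\in\mathcal{B}:x\in B\}$ for the induced (countable) weak neighborhood base at $x$. The one structural feature I will lean on is that every member of $\mathcal{B}_{x}$ is a sequential neighborhood of $x$; this is the weak-base analogue of the openness that powered Lemmas~\ref{l5} and~\ref{l6}.

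First I would locate the point $x$. Suppose, for contradiction, that for every $x\in K$ there is $B_{x}\in\mathcal{B}_{x}$ with $P\not\subseteq f(B_{x})$ for all $P\in\mathcal{P}_{y}$. Since $\mathcal{B}$ is countable, these bad members form a countable list $\{B_{m}:m\in\mathbb{N}\}$, and $K\subseteq\bigcup_{m}B_{m}$. Enumerate $\mathcal{P}_{y}=\{P_{n}:n\in\mathbb{N}\}$ and set $F_{n}=\bigcap_{i=1}^{n}P_{i}$; because $\mathcal{P}_{y}$ is closed under finite intersections, $F_{n}\in\mathcal{P}_{y}$ and $F_{n+1}\subseteq F_{n}$. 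For $n\ge m$ the badness of $B_{m}$ forces $F_{n}\not\subseteq f(B_{m})$, so pick $x_{n,m}\in F_{n}\setminus f(B_{m})$ and list these as $y_{k}=x_{n,m}$ with $k=m+n(n-1)/2$, exactly the diagonal enumeration of Lemma~\ref{l6}. As the $F_{n}$ are a decreasing network at $y$, $\{y_{k}\}\to y$; lifting through the sequence-covering map yields $\{x_{k}\}\to x\in K$ with $f(x_{k})=y_{k}$. Now $x\in B_{m_{0}}$ for some $m_{0}$, and since $B_{m_{0}}$ is a sequential neighborhood of $x$, the sequence $\{x_{k}\}$ is eventually in $B_{m_{0}}$, so $y_{k}\in f(B_{m_{0}})$ eventually; this contradicts $x_{n,m_{0}}\in F_{n}\setminus f(B_{m_{0}})$ for the infinitely many $k$ with $m=m_{0}$. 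Hence there is $x\in K$ such that every $B\in\mathcal{B}_{x}$ satisfies $P\subseteq f(B)$ for some $P\in\mathcal{P}_{y}$.

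It remains to pass to a decreasing base while preserving this containment. The temptation is to intersect members of $\mathcal{B}_{x}$, but $f(B\cap B')$ need not contain any $P$, so instead I would stay inside $\mathcal{B}_{x}$ using the intersection axiom~(a) in the definition of a weak base. Writing $\mathcal{B}_{x}=\{C_{i}:i\in\mathbb{N}\}$, put $B_{x1}=C_{1}$ and, given $B_{xi}\in\mathcal{B}_{x}$, choose $B_{x,i+1}\in\mathcal{B}_{x}$ with $B_{x,i+1}\subseteq B_{xi}\cap C_{i+1}$. Then $\{B_{xi}\}_{i}$ is decreasing, is still a weak neighborhood base at $x$ because $B_{xi}\subseteq C_{i}$, and every $B_{xi}$ belongs to $\mathcal{B}_{x}$, so $f(B_{xi})\supseteq P$ for some $P\in\mathcal{P}_{y}$ by the preceding paragraph. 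This is exactly the asserted conclusion.

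I expect the genuine obstacle to be the structural claim in the first paragraph: that the externally weak base members behave as sequential neighborhoods of the limit point $x$, forcing $\{x_{k}\}$ into $B_{m_{0}}$. In Lemmas~\ref{l5} and~\ref{l6} this step was free from openness, whereas here it is precisely the content of ``weak,'' and both the contradiction and the harmless shrinking of $C_{i+1}$ to $B_{x,i+1}\in\mathcal{B}_{x}$ depend on it.
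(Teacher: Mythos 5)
Your overall route is the paper's route: negate, use boundary-compactness together with hypothesis (2) to get a countable externally weak base, select one ``bad'' member per point of $\partial f^{-1}(y)$, and rerun the diagonal enumeration $k=m+n(n-1)/2$ and sequence-covering lift of Lemma~\ref{l6} to reach a contradiction (the paper literally says ``using the argument from the proof of Lemma~\ref{l6}''), after which a decreasing base is extracted; your final paragraph makes explicit a refinement step the paper leaves implicit. However, there is one step, exactly the one you yourself flag as the likely obstacle, that is wrong as written. You define $\mathcal{B}_{x}=\{B\in\mathcal{B}:x\in B\}$ and assert that \emph{every} member of $\mathcal{B}$ containing $x$ is a sequential neighborhood of $x$; you then invoke this for the limit $x$ of the lifted sequence via ``$x\in B_{m_{0}}$ for some $m_{0}$,'' i.e., via the cover $K\subseteq\bigcup_{m}B_{m}$. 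That transfer is what openness bought in Lemmas~\ref{l5} and~\ref{l6}, and it fails for weak bases: a weak-base (or externally weak base) member assigned to a point $x'$ is a sequential neighborhood of $x'$ (this is the standard fact that a weak base is an $sn$-network), but it need not be a sequential neighborhood of some \emph{other} point $x$ it happens to contain. For instance, in the Arens space one may enlarge a weak-base set at the non-first-countable point by a single point $x_{1}$ of the main sequence, without any tail of the sequence converging to $x_{1}$; the enlarged family is still a weak base, yet that member contains $x_{1}$ and is not a sequential neighborhood of it. So the sentence ``since $B_{m_{0}}$ is a sequential neighborhood of $x$, the sequence $\{x_{k}\}$ is eventually in $B_{m_{0}}$'' is unjustified when $B_{m_{0}}=B_{x'}$ with $x'\neq x$.

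The repair is short and is what a careful reading of the paper's proof requires anyway: since a bad set $B_{x}$ was chosen for \emph{every} point of $K=\partial f^{-1}(y)$, the limit point $x$ of the lifted sequence has its \emph{own} bad set $B_{x}$ in the countable list; take $m_{0}$ to be its index. Then $B_{m_{0}}$ belongs to the family assigned to $x$ by the externally weak base, hence is a (weak, so sequential) neighborhood of $x$, and the contradiction with $x_{n,m_{0}}\in F_{n}\setminus f(B_{m_{0}})$ for the infinitely many $k$ with $m=m_{0}$ goes through. For the same reason, in your last paragraph the filtered refinement $B_{x,i+1}\subseteq B_{xi}\cap C_{i+1}$ via axiom (a) must be carried out inside the family \emph{assigned} to $x$ (or after closing $\mathcal{B}$ under finite intersections, as the paper does), not inside the containment-defined $\mathcal{B}_{x}$, which need not satisfy (a). Note also that your selection of bad sets should come from the negation of the lemma's actual conclusion (quantified over decreasing weak neighborhood bases): one builds a decreasing base at $x$ inside the assigned family using (a), and the negation hands you a bad member of it; with these adjustments your argument coincides with the paper's.
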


\begin{proof}
Suppose not, there exists a non-isolated point $y\in Y$ such that
for every point $x\in
\partial f^{-1}(y)$ and every decreasingly weak neighborhood
base $\{B_{xi}\}_{i}$ of $x$, there is an $n\in \mathbb{N}$ such
that $P\not\subseteq f(B_{xn})$ for every $P\in \mathcal{P}_{y}$.
Since $\partial f^{-1}(y)$ is compact, it follows that $\partial
f^{-1}(y)$ has a countably externally weak base $\mathcal{B}$ of
$X$. Without loss of generality, we can assume that $\mathcal{B}$ is
closed under finite intersections. Therefore, for each $x\in
\partial f^{-1}(y)$, there exists a $B_{x}\in \mathcal{B}$
such that $P\not\subseteq f(B_{x})$ for every $P\in
\mathcal{P}_{y}$. Next, using the argument from the proof of
Lemma~\ref{l6}, this leads to a contradiction.
\end{proof}

The following Lemma~\ref{l4} is easily to check, and hence we omit
it.

\begin{lemma}\label{l4}
Let $X$ have a compact-countable weak base. Then every compact
subset of $X$ has a countably externally weak base in $X$.
\end{lemma}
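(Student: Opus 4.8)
The plan is to prove Lemma~\ref{l4}: if $X$ has a compact-countable weak base, then every compact subset of $X$ has a countably externally weak base in $X$. First I would fix a weak base $\mathcal{P}=\bigcup_{x\in X}\mathcal{P}_x$ for $X$ that is compact-countable, meaning every compact subset of $X$ meets only countably many members of $\mathcal{P}$. Let $K$ be a compact subset of $X$. The natural candidate for an externally weak base of $K$ is the collection
\[
\mathcal{P}_K=\{P\in\mathcal{P}:P\cap K\neq\emptyset\},
\]
or more precisely $\bigcup_{x\in K}\mathcal{P}_x$, and the first step is simply to observe that this family is countable: since $K$ is compact and $\mathcal{P}$ is compact-countable, only countably many $P\in\mathcal{P}$ can meet $K$, so a fortiori only countably many can belong to some $\mathcal{P}_x$ with $x\in K$.

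The second step is to verify that $\mathcal{P}_K$ actually functions as an externally weak base of $K$ in $X$, in the sense dual to Definition~2.4 but phrased for weak bases. For each $x\in K$ the family $\mathcal{P}_x$ is, by the weak-base axioms in Definition~2.1, a network at $x$ that is closed (up to refinement) under finite intersection, and each member is a sequential neighborhood serving the weak-base openness criterion. Thus for any $x\in K$ and any open $U\ni x$ there is $P\in\mathcal{P}_x\subset\mathcal{P}_K$ with $x\in P\subset U$; this is precisely the external-weak-base requirement. Since each $\mathcal{P}_x$ retains its role as the local weak-base collection at $x$, the restricted family $\mathcal{P}_K$ inherits all the structural properties needed, and the weak-base openness property localized to $K$ is exactly what an externally weak base provides.

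I do not expect any serious obstacle here, which is consistent with the paper's own remark that the verification is routine (``easily to check''). The only point requiring a small amount of care is matching the definition of ``externally weak base'' to the weak-base structure: one must confirm that taking the union of the local families $\mathcal{P}_x$ over $x\in K$ genuinely yields a family witnessing the externally-weak-base property for every point of $K$, rather than merely an external base of open sets. Once the compact-countability of the weak base is invoked to secure countability and the weak-base axioms are invoked pointwise to secure the network-and-sequential-neighborhood behavior, the proof closes immediately, so I would present it as a two-line argument.
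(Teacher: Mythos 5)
Your proof is correct and is exactly the routine argument the paper has in mind: the paper itself omits the proof as ``easily to check,'' and your candidate family $\bigcup_{x\in K}\mathcal{P}_x$, countable because each $P\in\mathcal{P}_x$ contains $x\in K$ (so meets $K$, and compact-countability bounds the members of $\mathcal{P}$ meeting $K$), together with the pointwise weak-base properties of each $\mathcal{P}_x$, is the intended verification. The only cosmetic imprecision is attributing the sequential-neighborhood property of weak-base elements directly to Definition~2.1, whereas it is a (standard, easily derived) consequence of the weak-base openness criterion rather than an axiom; this does not affect the validity of the argument.
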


\begin{theorem}\label{t2}
Let $f:X\rightarrow Y$ be a sequence-covering and boundary-compact
map, where $X$ has a compact-countable weak base. Then $Y$ is
$snf$-countable if and only if $f$ is an 1-sequence-covering map.
\end{theorem}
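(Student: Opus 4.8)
The plan is to establish the two implications separately; the necessity (``only if'') direction carries essentially all of the work, while sufficiency is immediate from the machinery already in place.

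For sufficiency, suppose $f$ is an $1$-sequence-covering map. I would first record that any space with a compact-countable weak base is $snf$-countable. Indeed, a weak base is always an $sn$-network: if $P\in\mathcal{P}_x$ and some sequence converging to $x$ failed to be eventually in $P$, then after passing to a subsequence $\{x_n\}$ lying off $P$ (and off $x$), the set $G=X\setminus\{x_n:n\in\mathbb{N}\}$ cannot be open, since $x\in G$ and $x_n\to x$. By the weak-base condition some point $z\in G$ then admits no member of $\mathcal{P}_z$ contained in $G$, so every member of $\mathcal{P}_z$ meets $\{x_n\}$; as $\mathcal{P}_z$ is a network at $z$ this puts $z$ in the closure of $\{x_n\}$, which (Hausdorff) is $\{x_n\}\cup\{x\}$, forcing $z=x$. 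But then every member of $\mathcal{P}_x$, in particular $P$, meets $\{x_n\}$, contradicting $x_n\notin P$. Compact-countability makes each $\mathcal{P}_x$ countable, since $\{x\}$ is compact and meets only countably many members. Hence $X$ is $snf$-countable, and Lemma~\ref{l11} immediately gives that $Y$ is $snf$-countable.

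For necessity, assume $Y$ is $snf$-countable. Since $X$ has a compact-countable weak base, Lemma~\ref{l4} tells us that every compact subset of $X$ has a countably externally weak base in $X$; this is precisely hypothesis (2) of Lemma~\ref{l3}, while hypothesis (1) is our assumption on $Y$. Applying Lemma~\ref{l3}, I obtain for each non-isolated $y\in Y$ a point $x_y\in\partial f^{-1}(y)$ together with a decreasing weak neighborhood base $\{B_{x_y i}\}_{i}$ at $x_y$ such that each $f(B_{x_y i})$ contains some $P\in\mathcal{P}_y$; since the members of $\mathcal{P}_y$ are sequential neighborhoods of $y$ and a superset of a sequential neighborhood is again one, every $f(B_{x_y i})$ is a sequential neighborhood of $y$. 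For an isolated $y$ I simply fix any $x_y\in f^{-1}(y)$, which trivially has the required property because sequences converging to $y$ are eventually constant.

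It then remains to run the selection argument from the proof of Theorem~\ref{t7}, now with weak neighborhoods in place of open ones. Given $\{y_n\}$ converging to $y$, for each $i$ the sequential neighborhood $f(B_{x_y i})$ contains $y_n$ for all $n\ge m_i$, and I arrange $m_1<m_2<\cdots$. I choose $x_n\in f^{-1}(y_n)$ arbitrarily for $n<m_1$ and $x_n\in f^{-1}(y_n)\cap B_{x_y i}$ for $m_i\le n<m_{i+1}$, the latter intersection being nonempty exactly because $y_n\in f(B_{x_y i})$. The one point needing care — and the only place the argument genuinely departs from Theorem~\ref{t7}, where first-countability made it automatic — is the convergence $\{x_n\}\to x_y$ despite the $B_{x_y i}$ being merely weak neighborhoods. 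This follows because $\{B_{x_y i}\}_{i}$ is in particular a network at $x_y$: by construction $\{x_n\}$ is eventually in every $B_{x_y i}$, and any open $U\ni x_y$ contains some $B_{x_y i}$, so $\{x_n\}$ is eventually in $U$. Thus $\{x_n\}\to x_y$ with $f(x_n)=y_n$, and $f$ is an $1$-sequence-covering map. The essential obstacle is therefore already absorbed into Lemma~\ref{l3}; what remains afterward is the routine-but-careful convergence check just described.
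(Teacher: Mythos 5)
Your proposal is correct and follows essentially the same route as the paper: sufficiency via Lemma~\ref{l11}, and necessity by combining Lemmas~\ref{l3} and~\ref{l4} with the selection argument of Theorem~\ref{t7}. The only differences are that you make explicit two points the paper leaves implicit --- that a compact-countable weak base makes $X$ itself $snf$-countable (so Lemma~\ref{l11} applies), and that the chosen sequence converges to $x_{y}$ even though the $B_{x_{y}i}$ are only weak neighborhoods, which follows since they form a decreasing network at $x_{y}$ --- both of which are accurate and welcome.
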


\begin{proof}
Necessity. Let $y$ be a non-isolated point in $Y$. Since $X$ has a
compact-countable weak base, it follows from Lemmas~\ref{l3}
and~\ref{l4} that there exists a point $x_{y}\in\partial f^{-1}(y)$
and a decreasingly countably weak base $\{B_{n}: n\in \mathbb{N}\}$
at point $x_{y}$ such that for each $n\in \mathbb{N}$, there is a
$P\in \mathcal{P}_{y}$ satisfying $P\subset f(B_{n})$. Suppose that
$\{y_{n}\}$ is a sequence in $Y$, which converges to $y$. Then we
can take a sequence $\{x_{n}\}$ in $X$ by the similar argument from
the proof of Theorem~\ref{t7}. Therefore, $f$ is an
1-sequence-covering map.

Sufficiency. By Lemma~\ref{l11}, $Y$ is $snf$-countable.
\end{proof}

We don't know whether the condition ``compact-countable weak base''
on $X$ can be replaced by ``point-countable weak base'' in
Theorem~\ref{t2},

\begin{corollary}\label{c1}
Let $f:X\rightarrow Y$ be a sequence-covering and boundary-compact
map, where $X$ is $g$-metrizable. Then $Y$ is $snf$-countable if and
only if $f$ is an 1-sequence-covering map.
\end{corollary}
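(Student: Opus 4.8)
The plan is to obtain this as an immediate consequence of Theorem~\ref{t2}, whose conclusion is exactly the stated equivalence but under the hypothesis that $X$ has a compact-countable weak base. So the whole task reduces to one verification: that every $g$-metrizable space carries such a weak base. First I would unpack the definition: since $X$ is $g$-metrizable it is regular and admits a weak base $\mathcal{B}=\bigcup_{n\in\mathbb{N}}\mathcal{B}_{n}$ in which each $\mathcal{B}_{n}$ is locally finite.

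Next I would recall the elementary fact that a locally finite family is compact-finite. Given a compact subset $K\subset X$, every point of $K$ has an open neighborhood meeting only finitely many members of $\mathcal{B}_{n}$; finitely many such neighborhoods cover $K$, so $K$ meets only finitely many members of $\mathcal{B}_{n}$. Summing over $n\in\mathbb{N}$, any compact subset of $X$ meets only countably many members of $\mathcal{B}$, which says precisely that $\mathcal{B}$ is a compact-countable weak base on $X$. With this established, the hypotheses of Theorem~\ref{t2} hold, and the equivalence ``$Y$ is $snf$-countable if and only if $f$ is an $1$-sequence-covering map'' follows at once.

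I expect essentially no obstacle here. The only step needing any care is the passage from $\sigma$-locally finite to compact-countable, and that collapses to the standard observation that locally finite families are compact-finite together with a countable union. No new information about the sequence-covering or boundary-compact structure of $f$ is required at this point, since all of that work is already packaged inside Theorem~\ref{t2} (and, further upstream, in Lemmas~\ref{l3} and~\ref{l4}).
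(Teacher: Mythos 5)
Your proposal is correct and follows exactly the route the paper intends: Corollary~\ref{c1} is stated there without proof as an immediate consequence of Theorem~\ref{t2}, and the bridging fact you verify --- that the $\sigma$-locally finite weak base of a $g$-metrizable space is compact-countable because locally finite families are compact-finite --- is precisely the standard observation being taken for granted. Nothing is missing; you have merely made the implicit step explicit.
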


Each closed sequence-covering map on metric spaces is
1-sequence-covering \cite{YP}. Now, we improve the result in the
following theorem.

\begin{theorem}
Let $f:X\rightarrow Y$ be a closed sequence-covering map, where $X$
is $g$-metrizable. Then $f$ is an 1-sequence-covering map.
\end{theorem}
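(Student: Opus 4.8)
The plan is to deduce the theorem from Corollary~\ref{c1}. Since $X$ is $g$-metrizable and $f$ is sequence-covering, it suffices to verify the two remaining hypotheses of that corollary: that $f$ is a boundary-compact map and that $Y$ is $snf$-countable. Once both are in place, Corollary~\ref{c1} immediately yields that $f$ is an 1-sequence-covering map.

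First I would establish boundary-compactness, i.e. that $\partial f^{-1}(y)$ is compact for every $y\in Y$; this is the $g$-metrizable analogue of Vainstein's lemma for closed maps on metric spaces. The argument I have in mind uses that $g$-metrizable spaces are paracompact, so that a closed, non-compact subset $K=\partial f^{-1}(y)$ fails to be countably compact and hence contains a countable closed discrete set $\{x_{n}\}$. Each $x_{n}$ lies in $\overline{X\setminus f^{-1}(y)}$, and using the $\sigma$-locally finite weak base one selects companion points $z_{n}\in X\setminus f^{-1}(y)$ clustering appropriately at the $x_{n}$; applying the closedness of $f$ to the resulting closed set $\{z_{n}\}$ then contradicts $y\in\overline{\{f(z_{n})\}}$. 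I expect this to be the main obstacle, because $g$-metrizable spaces need not be Fr\'echet (for instance $S_{\omega}$), so the naive extraction of a convergent-from-outside sequence at each boundary point must be replaced by this closed-discrete/weak-base argument, or else one invokes the known fact that closed maps on $g$-metrizable spaces are boundary-compact.

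With boundary-compactness in hand, I would prove that $Y$ is $snf$-countable by a construction parallel to the proof of Lemma~\ref{l1}, using externally weak bases in place of external bases. Since $X$ is $g$-metrizable it has a $\sigma$-locally finite, hence compact-countable, weak base, so by Lemma~\ref{l4} the compact set $K=\partial f^{-1}(y)$ has a countable externally weak base $\mathcal{U}$ in $X$, which (as in the proof of Lemma~\ref{l3}) we may assume is closed under finite intersections. For each non-isolated $y$ put $\mathcal{V}=\{\bigcup\mathcal{F}:\mathcal{F}\subset\mathcal{U}\ \text{finite},\ K\subset\bigcup\mathcal{F}\}$ and let $\mathcal{P}_{y}$ be the family of all finite intersections of members of $f(\mathcal{V})$. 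Then $\mathcal{P}_{y}$ is countable; it is a network at $y$ (for open $U\ni y$ one has $K\subset f^{-1}(U)$, and covering $K$ by members of $\mathcal{U}$ inside $f^{-1}(U)$ and taking a finite subcover produces a member contained in $U$); it is closed under finite intersections by construction, the cover-refinement being legitimate because $\mathcal{U}$ is intersection-closed; and each member is a sequential neighborhood of $y$, since given $\{y_{n}\}\to y$ one lifts it via sequence-covering to $\{x_{n}\}\to x\in K$, and as some member of $\mathcal{U}$ containing $x$ is a sequential neighborhood of $x$ (weak base members are sequential neighborhoods), $\{x_{n}\}$ is eventually inside $\bigcup\mathcal{F}$ and hence $\{y_{n}\}$ is eventually inside its image. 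Thus $\mathcal{P}_{y}$ is a countable $sn$-network at $y$, so $Y$ is $snf$-countable. Alternatively, one may simply quote that closed images of $g$-metrizable spaces are $g$-metrizable, whence $Y$ is $snf$-countable.

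Finally, having verified that $f$ is a sequence-covering, boundary-compact map with $X$ $g$-metrizable and $Y$ $snf$-countable, Corollary~\ref{c1} gives that $f$ is an 1-sequence-covering map, which completes the proof. The only genuinely delicate point is the boundary-compactness established in the second step; the remaining work is bookkeeping on the weak base combined with the earlier lemmas.
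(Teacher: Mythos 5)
Your overall reduction is the paper's skeleton (verify boundary-compactness and that $Y$ is $snf$-countable, then apply Corollary~\ref{c1}), and your second step---building a countable $sn$-network at $y$ from a countable externally weak base of the compact boundary, as in Lemmas~\ref{l3} and~\ref{l4}---is essentially sound. The genuine gap is your first step. Neither your sketched Vainstein-type argument nor your fallback ``known fact that closed maps on $g$-metrizable spaces are boundary-compact'' works, because that fact is false: let $X=\bigoplus_{n\in\mathbb{N}}C_{n}$ be the topological sum of countably many convergent sequences and $q\colon X\rightarrow S_{\omega}$ the quotient map identifying all limit points to one point $y$. Then $q$ is closed and $X$ is metric, hence $g$-metrizable and paracompact, yet $\partial q^{-1}(y)$ is the infinite closed discrete set of limit points, not compact. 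Your sketch never invokes the sequence-covering hypothesis, so if it were correct it would apply to $q$; tracing it through shows exactly where it breaks. Whatever companion points $z_{n}\in X\setminus f^{-1}(y)$ you select near the closed discrete set $\{x_{n}\}\subset\partial f^{-1}(y)$, in this example the image $\{f(z_{n})\}$ meets each branch of $S_{\omega}$ in at most finitely many points and is therefore closed and discrete in $S_{\omega}$, so $y\notin\overline{\{f(z_{n})\}}$ and the intended contradiction with closedness of $f$ never materializes. Some control of $Y$ at $y$ (first countability, $snf$-countability, or at least that $Y$ contains no closed copy of $S_{\omega}$) is indispensable for any such boundary-compactness lemma, and in the present theorem it can only come from the sequence-covering hypothesis; note that $q$ itself is not sequence-covering, since a sequence in $S_{\omega}$ meeting infinitely many branches cannot be lifted to a convergent sequence in the sum.

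The paper's actual proof respects this dependency and runs in the opposite order from yours: it first uses that $f$ is closed \emph{and sequence-covering} to transfer $g$-metrizability from $X$ to $Y$ (\cite[Theorem 3.3]{LC})---your alternative quote ``closed images of $g$-metrizable spaces are $g$-metrizable'' is likewise false without the sequence-covering hypothesis, by the same example $S_{\omega}$---and only then deduces that $f$ is boundary-compact from \cite[Corollary 2.2]{LC}, which uses the information about $Y$; since a $g$-metrizable $Y$ is $snf$-countable (a $\sigma$-locally finite weak base is point-countable and its members are sequential neighborhoods), Corollary~\ref{c1} then finishes the proof. To repair your write-up, reverse the order of your two steps: either quote the transfer theorem for $Y$ and then the boundary-compactness result, or first prove directly from sequence-covering that $Y$ contains no closed copy of $S_{\omega}$ and then run a countable-compactness argument, in the spirit of Lemmas~\ref{l9} and~\ref{l10}.
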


\begin{proof}
Since $X$ is $g$-metrizable and $f$ is a closed sequence-covering
map, $Y$ is $g$-metrizable\cite[Theroem 3.3]{LC}. Therefore, $f$ is
a boundary-compact map by \cite[Corollary 2.2]{LC}. Hence $f$ is an
1-sequence-covering map by Corollary~\ref{c1}.
\end{proof}

\begin{question}
Let $f:X\rightarrow Y$ be a sequence-covering and boundary-compact
 map. If $X$ is $g$-metrizable, then is $f$ an 1-sequence-covering map?
\end{question}

\vskip 0.5cm
\section{Closed sequence-covering maps}

Say that a Tychonoff space $X$ is {\it strongly monotonically
monolithic} \cite{TV} if, for any $A\subset X$ we can assign an
external base $\mathcal {O}(A)$ to the set $\overline{A}$ in such a
way that the following conditions are satisfied:

(a) $|\mathcal {O}(A)|\leq \mbox{max}\{|A|, \omega\}$;

(b) if $A\subset B\subset X$ then $\mathcal {O}(A)\subset\mathcal
{O}(B)$;

(c) if $\alpha$ is an ordinal and we have a family
$\{A_{\beta}:\beta <\alpha\}$ of subsets of $X$ such that $\beta
<\beta^{\prime}<\alpha$ implies $A_{\beta}\subset
A_{\beta^{\prime}}$ then $\mathcal {O}(\cup_{\beta
<\alpha}A_{\beta})=\cup_{\beta <\alpha}\mathcal {O}(A_{\beta})$.

From \cite[Proposition 2.5]{TV} it follows that a Tychonoff space
with a point-countable base is strongly monotonically monolithic.
Moreover, if $X$ is a strongly monotonically monolithic space, then
it is easy to see that $X\in {\it \Omega}$ by \cite[Theorem
2.7]{TV}.

\begin{lemma}\label{l9}
Let $f:X\rightarrow Y$ be a closed sequence-covering map, where $X$
is a strongly monotonically monolithic space. Then $Y$ contains no
closed copy of $S_{\omega}$.
\end{lemma}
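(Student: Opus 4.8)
The plan is to argue by contradiction: suppose $Y$ contains a closed copy $S$ of $S_\omega$. I would first fix notation for the sequence fan: let $z$ be the distinguished limit point of $S$, and for each $m\in\mathbb{N}$ let $\{y^{(m)}_n\}_n$ be the $m$-th convergent sequence of $S$, so that $y^{(m)}_n\to z$ as $n\to\infty$ for each $m$, but no sequence picking infinitely many points from infinitely many of the ``spokes'' converges to $z$ (this is the characteristic non-first-countability of $S_\omega$ at $z$). Since $f$ is sequence-covering and $S$ is closed in $Y$, I would like to transfer this configuration upstairs into $X$ and contradict the strong monotone monolithicity of $X$, which by the remark preceding the lemma forces $X\in\Omega$, so in particular every compact subset of $X$ is metrizable and has a countable neighborhood base.

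Next I would use sequence-covering to lift each spoke. For each fixed $m$, the sequence $\{y^{(m)}_n\}_n$ converges to $z$, so there is a sequence $\{x^{(m)}_n\}_n$ in $X$ with $x^{(m)}_n\in f^{-1}(y^{(m)}_n)$ converging to some point of $f^{-1}(z)$. The key step is to locate a \emph{single} point $x_0\in f^{-1}(z)$ that receives infinitely many of these lifted spokes; this is where I expect to lean on strong monotone monolithicity rather than mere membership in $\Omega$. The natural device is to build the sets $A_k$ of lifted sequence points in an increasing chain and use the external bases $\mathcal{O}(A_k)$ from the definition, exploiting condition (c) (the continuity along increasing unions) together with (a) (the cardinality bound $|\mathcal{O}(A)|\le\max\{|A|,\omega\}$) to control the neighborhood structure at a limit point $x_0$. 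Once $x_0$ is pinned down, the external base $\mathcal{O}(\{x_0\})$ is countable, giving $x_0$ a countable external (hence genuine) neighborhood base $\{W_i\}_i$ in $X$, and I would arrange that each $W_i$ meets all but finitely many spokes.

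The heart of the contradiction is then a diagonal argument. Because $x_0$ has a countable decreasing neighborhood base $\{W_i\}$ and each $W_i$ captures, for every $m$, a tail of the lifted spoke $\{x^{(m)}_n\}_n$, I would select a diagonal sequence $\{x^{(m)}_{n(m)}\}_m$ drawn from infinitely many distinct spokes and converging to $x_0$ in $X$. Applying $f$ and using continuity, the images $f(x^{(m)}_{n(m)})=y^{(m)}_{n(m)}$ must converge to $f(x_0)=z$ in $Y$. But a sequence meeting infinitely many distinct spokes of $S_\omega$ at non-limit points cannot converge to $z$ inside the closed subspace $S$, and since $S$ is closed the convergence in $Y$ would have to take place in $S$; this is the contradiction. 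The main obstacle I anticipate is the selection of the single limit point $x_0$ absorbing infinitely many spokes: a priori the lifts of different spokes could converge to different points of the (possibly large) fiber $f^{-1}(z)$, so the real work is to exploit conditions (a)--(c) of strong monotone monolithicity to force countable-character behavior along an increasing union and thereby concentrate the limits, after which the diagonal step is routine.
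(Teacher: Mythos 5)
Your overall frame (lift spokes, find a point of countable character absorbing infinitely many of them, diagonalize, push down) is sound only in part, and the step you yourself flag as ``the real work'' --- concentrating infinitely many lifted spokes at a single point $x_0\in f^{-1}(z)$ --- is not just unproven but unprovable from the tools you cite. The lifts of distinct spokes may converge to pairwise distinct points $u_m\in f^{-1}(z)$ forming a closed discrete set, and nothing in conditions (a)--(c) of strong monotone monolithicity rules this out; in that configuration no point of $X$ has every neighborhood meeting infinitely many spokes, so there is no $x_0$ to diagonalize through. Decisively, your argument never uses that $f$ is a \emph{closed} map (closedness of the copy $S$ in $Y$ is part of the contradiction hypothesis, not a property of $f$), and the lemma is false without closedness of $f$: by \cite[Corollary 2.4.4]{Ls3}, quoted in the remark in Section 3 of the paper, $S_{\omega}$ itself is the image of a metric space under a sequence-covering $s$-map, and metric spaces have $\sigma$-discrete, hence point-countable, bases, so they are strongly monotonically monolithic by \cite[Proposition 2.5]{TV}. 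Since $S_{\omega}$ is a closed copy of $S_{\omega}$ in itself, any proof that does not invoke closedness of $f$ must break down, and yours would have to break exactly at the concentration step.

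The paper's proof shows where closedness enters. It lifts the finite unions $L_k$ of the first $k$ spokes (still convergent sequences) to sequences $M_k\rightarrow u_k\in f^{-1}(y)$ and splits into three cases on $\{u_k:k\in\mathbb{N}\}$. If $\{u_k\}$ is finite or contains a nontrivial convergent sequence, one assembles a closed copy of $S_{\omega}$ inside $X$, contradicting first countability of $X$ (every point $x$ of a strongly monotonically monolithic space has the countable external base $\mathcal{O}(\{x\})$); these are the cases your concentration-plus-diagonal idea roughly parallels. In the remaining case $\{u_k\}$ is discrete: then $B=\{u_k:k\in\mathbb{N}\}\cup\bigcup_{k}M_k$ is countable, so $\overline{B}$ is metrizable by strong monotone monolithicity, which yields a discrete family $\{V_k\}$ of open subsets of $\overline{B}$ with $u_k\in V_k$; choosing in each $V_k$ one lifted point of the first spoke produces a closed discrete subset of $X$ whose image under the \emph{closed} map $f$ would be closed discrete in $Y$, yet that image is an infinite subsequence of a sequence converging to $y$ --- a contradiction. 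This discrete case is precisely the configuration your proposal cannot handle, so the gap is essential rather than a matter of missing detail.
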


\begin{proof}
 Suppose that $Y$ contains a closed copy of $S_{\omega}$, and that
$\{y\}\cup\{y_{i}(n):i, n\in\mathbb{N}\}$ is a closed copy of
$S_{\omega}$ in $Y$, here $y_{i}(n)\rightarrow y$ as
$i\rightarrow\infty$. For every $k\in\mathbb{N}$, put
$L_{k}=\cup\{y_{i}(n):i\in \mathbb{N}, n\leq k\}$. Hence $L_{k}$ is
a sequence converging to $y$. Let $M_{k}$ be a sequence of $X$
converging to $u_{k}\in f^{-1}(y)$ such that $f(M_{k})=L_{k}$. We
rewrite $M_{k}=\cup\{x_{i}(n, k):i\in \mathbb{N}, n\leq k\}$ with
each $f(x_{i}(n, k))=y_{i}(n)$.

Case 1: $\{u_{k}:k\in\mathbb{N}\}$ is finite.

There are a $k_{0}\in\mathbb N$ and an infinite subset
$\mathbb{N}_{1}\subset \mathbb{N}$ such that $M_{k}\rightarrow
u_{k_{0}}$ for every $k\in\mathbb{N}_{1}$, then $X$ contains a
closed copy of $S_{\omega}$. Hence $X$ is not first countable. This
is a contradiction.

Case 2: $\{u_{k}:k\in\mathbb{N}\}$ has a non-trivial convergent
sequence in $X$.

Without loss of generality, we suppose that $u_{k}\rightarrow u$ as
$k\rightarrow \infty$. Since $X$ is first-countable, let $\{U_{m}\}$
be a decreasingly and open neighborhood base of $X$ at point $u$
with $\overline{U}_{m+1}\subset U_{m}$. Then
$\bigcap_{m\in\mathbb{N}}U_{m}=\{u\}$. Fix $n$, pick $x_{i_{m}}(n,
k_{m})\in U_{m}\cap\{x_{i}(n, k_{m})\}_{i}$. We can suppose that
$i_{m}< i_{m+1}$. Then $\{f(x_{i_{m}}(n, k_{m}))\}_{m}$ is a
subsequence of $\{y_{i}(n)\}$. Since $f$ is closed, $\{x_{i_{m}}(n,
k_{m})\}_{m}$ is not discrete in $X$. Then there is a subsequence of
$\{x_{i_{m}}(n, k_{m})\}_{m}$ converging to a point $b\in X$ because
$X$ is a first-countable space. It is easy to see that $b=u$ by
$x_{i_{m}}(n, k_{m})\in U_{m}$ for every $m\in\mathbb{N}$. Hence
$x_{i_{m}}(n, k_{m})\rightarrow u$ as $m\rightarrow\infty$. Then
$\{u\}\cup\{x_{i_{m}}(n, k_{m}):n, m\in\mathbb{N}\}$ is a closed
copy of $S_{\omega}$ in $X$. Thus, $X$ is not first countable. This
is a contradiction.

Case 3: $\{u_{k}:k\in\mathbb{N}\}$ is discrete in $X$.

Let $B=\{u_{k}:k\in\mathbb{N}\}\cup\{M_{k}:k\in\mathbb{N}\}$. Since
$X$ is strongly monotonically monolithic, $\overline{B}$ is
metrizable. Hence there exists a discrete family
$\{V_{k}\}_{k\in\mathbb{N}}$ consisting of open subsets of
$\overline{B}$ with $u_{k}\in V_{k}$ for each $k\in\mathbb{N}$. Pick
$x_{i_{k}}(1, k)\in V_{k}\cap\{x_{i}(1, k)\}_{i}$ such that
$\{f(x_{i_{k}}(1, k))\}_{k}$ is a subsequence of $\{y_{i}(n)\}$.
Since $\{x_{i_{k}}(1, k)\}_{k}$ is discrete in $\overline{B}$,
$\{f(x_{i_{k}}(1, k))\}_{k}$ is discrete in $Y$. This is a
contradiction.

In a word, $Y$ contains no closed copy of $S_{\omega}$.
\end{proof}

\begin{lemma}\label{l10}
Let $f:X\rightarrow Y$ be a closed sequence-covering map, where $X$
is a strongly monotonically monolithic space. Then $\partial
f^{-1}(y)$ is compact for each point $y\in Y$.
\end{lemma}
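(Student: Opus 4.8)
The plan is to argue by contradiction and produce a closed copy of $S_{\omega}$ in $Y$, which is forbidden by Lemma~\ref{l9}. Write $K=\partial f^{-1}(y)$ and suppose $K$ is not compact. Since $X$ is strongly monotonically monolithic it is first countable (taking $A=\{x\}$ in the definition of the assignment $\mathcal{O}$ gives a countable external base of $\overline{\{x\}}=\{x\}$, i.e. a countable local base at $x$), and $X\in{\it \Omega}$. It is also routine that every subspace of a strongly monotonically monolithic space is again strongly monotonically monolithic (restrict each $\mathcal{O}(A)$ to the subspace), so $K$ inherits the property; in particular the closure in $X$ of any countable subset of $K$ has a countable external base, hence is separable metrizable.

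First I would reduce to the existence of an infinite closed discrete subset of $K$. A strongly monotonically monolithic space is a $D$-space, and a countably compact $D$-space is compact. Since $K$ is closed in $X$ it is strongly monotonically monolithic, so if $K$ were countably compact it would be compact, contrary to assumption. Hence $K$ is not countably compact, and there is an infinite set $\{z_{n}:n\in\mathbb{N}\}\subset K$ with no cluster point in $X$; that is, $\{z_{n}\}$ is closed and discrete in $X$.

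Next I would build the fan. For each $n$, since $z_{n}\in\partial f^{-1}(y)$ and $X$ is first countable, choose a sequence $w_{n,k}\to z_{n}$ $(k\to\infty)$ with $y_{n,k}:=f(w_{n,k})\neq y$; by continuity $y_{n,k}\to y$ as $k\to\infty$. The set $B=\{z_{n}\}\cup\{w_{n,k}\}$ is countable, so $\overline{B}$ is metrizable; fixing a metric $d$, I place the $w_{n,k}$ into pairwise disjoint open balls $V_{n}\ni z_{n}$ (discarding the finitely many $w_{n,k}\notin V_{n}$) and pass to a tail in $k$ so that $d(w_{n,k},z_{n})<1/n$. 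A short check shows $T=\{z_{n}\}\cup\{w_{n,k}\}$ is closed in $\overline{B}$, hence in $X$: any limit point would force a subsequence of the $z_{n}$ to converge, contradicting closed discreteness. Because $f$ is closed, $f(T)$ is closed in $Y$ and $f|_{T}\colon T\to f(T)$ is a closed, hence quotient, map. Topologically $T$ is the sum of the $\omega$ nontrivial convergent sequences $\{z_{n}\}\cup\{w_{n,k}\}_{k}$, and $f$ collapses the limit points $z_{n}$ to the single point $y$; provided the $y_{n,k}$ are pairwise distinct, this identification is exactly the one defining $S_{\omega}$, so $f(T)$ is a closed copy of $S_{\omega}$ in $Y$, contradicting Lemma~\ref{l9}.

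The hard part will be arranging, by a diagonal thinning, that the $y_{n,k}$ are pairwise distinct, so that $f(T)$ is a genuine copy of $S_{\omega}$ rather than a collapse of it (a finite fan, or a single convergent sequence). Here closedness of $f$ is the decisive tool: any diagonal selection $\{w_{n,k_{n}}\}$ of one point per spine is closed and discrete in $\overline{B}$ (again because $d(w_{n,k_{n}},z_{n})<1/n$ while the $z_{n}$ are closed discrete), so its image is closed and discrete in $Y$; this prevents the spines from collapsing onto one another or onto a single sequence converging to $y$, and it lets me refine the spines inductively so that their value sets are disjoint and pairwise non-accumulating. A secondary point to pin down carefully is the reduction used in the second paragraph, namely that a countably compact strongly monotonically monolithic space is compact, which is what guarantees that the non-compactness of $K$ actually supplies the infinite closed discrete set $\{z_{n}\}$.
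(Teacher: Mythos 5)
Your proposal is correct in outline, and its top-level skeleton coincides with the paper's: both use Lemma~\ref{l9} to rule out closed copies of $S_{\omega}$ in $Y$, deduce that $\partial f^{-1}(y)$ is countably compact, and then upgrade countable compactness to compactness via Tkachuk's theorem that strongly monotonically monolithic spaces are $D$-spaces \cite[Theorem 2.7]{TV} (together with heredity of the property, which you correctly note). The genuine difference is the middle step: the paper simply cites \cite[Theorem 2.6]{LC}, a theorem of C.~Liu which, for closed maps on (first-countable) spaces whose closed separable subspaces are normal---satisfied here since closures of countable sets are second countable and hence metrizable---derives countable compactness of every $\partial f^{-1}(y)$ from the absence of closed copies of $S_{\omega}$ in $Y$; you instead reprove this implication from scratch with the fan construction. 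Your route is self-contained and makes visible that sequence-covering enters only through Lemma~\ref{l9}, at the cost of redoing a known argument; the paper's route is shorter but opaque without the reference. One remark on what you flag as ``the hard part'': it is easier than you fear, and closedness is not the decisive tool there. Each value set $S_{n}=\{f(w_{n,k}):k\in\mathbb{N}\}$ is infinite (a sequence with finitely many values, all distinct from $y$, cannot converge to $y$ in a Hausdorff space), so a purely combinatorial greedy selection---at stage $m$ pick one new element of each of $S_{1},\dots,S_{m}$ avoiding the finitely many points already chosen---thins the spines so that the $y_{n,k}$ are pairwise distinct. Once that holds, no ``non-accumulation'' condition need be checked by hand: $f|_{T}$ is a closed, hence quotient, map onto the closed set $f(T)$, its fibres are exactly $\{z_{n}:n\in\mathbb{N}\}$ over $y$ and singletons elsewhere, so $f(T)$ is homeomorphic to the quotient of the sum $\bigoplus_{n}\bigl(\{z_{n}\}\cup\{w_{n,k}\}_{k}\bigr)$ collapsing the limit points, i.e.\ to $S_{\omega}$, and it is closed in $Y$. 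Closedness of $f$ is what makes $f|_{T}$ quotient and $f(T)$ closed; the diagonal closed-discrete observation you propose is correct but not needed for the thinning.
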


\begin{proof}
From Lemma~\ref{l9} it follows that $Y$ contains no closed copy
$S_{\omega}$. Since $X$ is a strongly monotonically monolithic
space, every closed separable subset of $X$ is metirzable, and hence
is normal. Therefore, $\partial f^{-1}(y)$ is countable compact for
each point $y\in Y$ by \cite[Theorem 2.6]{LC}. From \cite[Theorem
2.7]{TV} it easily follows that every countable compact subset of
$X$ is compact.
\end{proof}

\begin{theorem}\label{t6}
Let $f:X\rightarrow Y$ be a closed sequence-covering map, where $X$
is a strongly monotonically monolithic space. Then $f$ is an
1-sequence-covering map.
\end{theorem}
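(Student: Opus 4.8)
The plan is to reduce Theorem~\ref{t6} to the already-established Theorem~\ref{t0}, which says that a sequence-covering, boundary-compact map whose domain lies in $\Omega$ is automatically $1$-sequence-covering. Since $f$ is assumed to be a \emph{closed sequence-covering} map, the sequence-covering hypothesis of Theorem~\ref{t0} is satisfied for free. Thus the only two things I need to check are that (i) $f$ is a boundary-compact map and (ii) the domain $X$ belongs to $\Omega$.

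Verification (ii) is immediate from the remark preceding Lemma~\ref{l9}: a strongly monotonically monolithic space belongs to $\Omega$ by \cite[Theorem 2.7]{TV}, so $X\in\Omega$ under the present hypotheses. Verification (i), which is the substantive point, I would obtain directly from Lemma~\ref{l10}. That lemma establishes exactly that $\partial f^{-1}(y)$ is compact for every $y\in Y$ when $f$ is a closed sequence-covering map out of a strongly monotonically monolithic space, and this is by definition the assertion that $f$ is boundary-compact.

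With (i) and (ii) in hand, $f$ becomes a sequence-covering, boundary-compact map with $X\in\Omega$, and Theorem~\ref{t0} then yields that $f$ is $1$-sequence-covering, completing the argument.

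The main obstacle has effectively been pushed into Lemma~\ref{l10}, and in turn into Lemma~\ref{l9}: the real work is showing that $Y$ contains no closed copy of $S_\omega$, since once that is known, the metrizability (hence normality) of closed separable subsets of $X$ forces each $\partial f^{-1}(y)$ to be countably compact, and \cite[Theorem 2.7]{TV} upgrades countable compactness to compactness. Granting Lemma~\ref{l10}, the theorem itself is an immediate three-line corollary; the delicate part is entirely in ruling out the closed $S_\omega$, where one must handle the trichotomy on the limit points $\{u_k\}$ of the lifted sequences and use closedness of $f$ to prevent a closed fan from being reflected back into the first-countable domain $X$.
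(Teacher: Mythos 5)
Your proposal is correct and matches the paper's own proof exactly: the paper also derives Theorem~\ref{t6} by combining Lemma~\ref{l10} (boundary-compactness) with Theorem~\ref{t0}, using the remark that strongly monotonically monolithic spaces lie in ${\it \Omega}$. Your additional observation that the real work is concentrated in Lemmas~\ref{l9} and~\ref{l10} accurately reflects the structure of the paper's argument.
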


\begin{proof}
It is easy to see by Lemma~\ref{l10} and Theorem~\ref{t0}.
\end{proof}

\begin{corollary}\label{t3}
Let $f:X\rightarrow Y$ be a closed sequence-covering map, where $X$
is a Tychonoff space with a point-countable base. Then $f$ is an
1-sequence-covering map.
\end{corollary}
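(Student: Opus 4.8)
The plan is to deduce this corollary directly from Theorem~\ref{t6}, so that the only genuine task is to verify that the hypotheses of that theorem are satisfied. The key observation is a class inclusion: every Tychonoff space with a point-countable base falls within the class of strongly monotonically monolithic spaces, and Theorem~\ref{t6} has already settled the $1$-sequence-covering conclusion for closed sequence-covering maps whose domain is strongly monotonically monolithic. Thus no new construction is required; the corollary is a specialization.

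Concretely, first I would invoke \cite[Proposition 2.5]{TV}, recorded in the paragraph preceding Lemma~\ref{l9}, which asserts that a Tychonoff space with a point-countable base is strongly monotonically monolithic. Applying this to the given space $X$, we conclude that $X$ is strongly monotonically monolithic. Second, since $f:X\rightarrow Y$ is assumed to be a closed sequence-covering map and $X$ has now been recognized as a strongly monotonically monolithic space, Theorem~\ref{t6} applies verbatim and yields that $f$ is an $1$-sequence-covering map. This completes the argument.

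I expect no real obstacle at this step, since all of the substantive machinery is already absorbed into Theorem~\ref{t6}: the compactness of $\partial f^{-1}(y)$ for each $y\in Y$ (via Lemmas~\ref{l9} and~\ref{l10}), the fact that a strongly monotonically monolithic space lies in ${\it \Omega}$ (so that Lemma~\ref{l1} produces an $snf$-countable $Y$), and the selection argument underlying Theorem~\ref{t7} and Theorem~\ref{t0}. The only point worth a moment's care is that the hypothesis in Theorem~\ref{t6} is precisely \emph{strongly monotonically monolithic} and not merely \emph{point-countable base}, which is exactly what \cite[Proposition 2.5]{TV} bridges; once that inclusion is cited, the proof is immediate.
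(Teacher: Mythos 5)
Your proposal is correct and matches the paper's own (implicit) argument exactly: the paper states this corollary immediately after Theorem~\ref{t6}, relying on the inclusion from \cite[Proposition 2.5]{TV} (recorded in the paragraph preceding Lemma~\ref{l9}) that every Tychonoff space with a point-countable base is strongly monotonically monolithic. Nothing further is needed.
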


In fact, we can replace ``Tychonoff'' by ``regular'' in
Corollary~\ref{t3}, and hence we have the following result.

\begin{corollary}\label{c3}
Let $f:X\rightarrow Y$ be a closed sequence-covering map, where $X$
is a regular space with a point-countable base. Then $f$ is an
1-sequence-covering map.
\end{corollary}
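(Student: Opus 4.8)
The plan is to observe that complete regularity is never genuinely used in the chain Lemma~\ref{l9} $\to$ Lemma~\ref{l10} $\to$ Theorem~\ref{t6} $\to$ Corollary~\ref{t3}: the hypothesis ``Tychonoff'' enters only through the phrase ``strongly monotonically monolithic,'' and the proofs invoke that property solely through three of its consequences. So I would isolate those three consequences, verify that each already holds for an arbitrary \emph{regular} space $X$ with a point-countable base, and then substitute them for the appeals to strong monotone monolithicity; the regular version of the corollary then follows by rerunning the same arguments.

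The three consequences actually used are: (i) $X\in{\it \Omega}$; (ii) every closed separable subspace of $X$ is metrizable (this underlies Case~3 of Lemma~\ref{l9} and the normality of closed separable sets in Lemma~\ref{l10}); and (iii) every countably compact subspace of $X$ is compact (the role played by \cite[Theorem 2.7]{TV} in Lemma~\ref{l10}). Property (i) was already recorded in Section~3 for \emph{every} space with a point-countable base, with no appeal to completeness, so it is available here and feeds Theorem~\ref{t0}. Property (iii) is the classical fact that a countably compact space carrying a point-countable base is compact (indeed compact metrizable), which again needs no separation beyond regularity.

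The one step worth writing out is (ii). First I would prove the elementary lemma that a separable space with a point-countable base is second-countable: if $D$ is a countable dense set and $\mathcal B$ a point-countable base, then each $B\in\mathcal B$ is nonempty open and hence meets $D$, so $\mathcal B=\bigcup_{d\in D}\{B\in\mathcal B:d\in B\}$ is a countable union of countable families and is therefore countable. Now if $C$ is a closed separable subspace of $X$, then $C$ is regular as a subspace, is separable, and carries a point-countable base (the traces of $\mathcal B$), hence is second-countable; by the Urysohn metrization theorem $C$ is metrizable. This yields (ii) from regularity alone. With (i)--(iii) in hand, the proofs of Lemma~\ref{l9} and Lemma~\ref{l10} go through verbatim after two cosmetic replacements: ``$X$ is strongly monotonically monolithic, hence $\overline B$ is metrizable'' becomes ``$B$ is countable, so $\overline B$ is separable with a point-countable base, hence metrizable,'' and the citation to \cite[Theorem 2.7]{TV} is replaced by (iii). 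Thus $\partial f^{-1}(y)$ is compact for every $y\in Y$, and Theorem~\ref{t0} (applicable by (i)) shows that $f$ is an $1$-sequence-covering map.

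I expect the main obstacle to be purely bookkeeping: confirming that strong monotone monolithicity is invoked only through (i)--(iii) and not, say, through the full monotone external-base structure $\mathcal O(\cdot)$ of its definition. The delicate point is Case~3 of Lemma~\ref{l9}, where a discrete open expansion $\{V_k\}$ of the discrete set $\{u_k\}$ must be produced inside $\overline B$; this is exactly the place that uses metrizability of $\overline B$, and since our (ii) delivers that metrizability from regularity, the weakening from ``Tychonoff'' to ``regular'' survives intact.
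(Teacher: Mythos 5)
Your proposal is correct, but it follows a genuinely different route from the paper. The paper's own proof of Corollary~\ref{c3} is a three-line citation argument: by \cite[Theorem 3.1]{LC} the closed sequence-covering image $Y$ of a regular space with a point-countable base again has a point-countable base; by \cite[Lemma 3.2]{LC1} this forces $f$ to be boundary-compact; and then Corollary~\ref{c0} finishes. You instead stay entirely on the domain side: you never establish (or need) any property of $Y$, but rather re-derive boundary-compactness of $f$ by rerunning Lemmas~\ref{l9} and~\ref{l10} after replacing strong monotone monolithicity by its three working consequences, each verified from regularity plus a point-countable base --- (i) $X\in{\it\Omega}$ (already recorded in Section~3 without any completeness hypothesis), (ii) closed separable subspaces are second-countable and hence, by Urysohn metrization (this is where regularity is genuinely used), metrizable, and (iii) countably compact subspaces are compact (Mi\v{s}\v{c}enko-type fact; point-countable base gives metaLindel\"ofness, and countably compact plus metaLindel\"of implies compact in regular spaces). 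Your audit of where Tychonoff enters is accurate, including the delicate point that Case~3 of Lemma~\ref{l9} needs only metrizability of $\overline{B}$ for a countable $B$, which your (ii) supplies. One small caveat: your proof is not fully self-contained either --- the adapted Lemma~\ref{l10} still leans on \cite[Theorem 2.6]{LC} to get countable compactness of $\partial f^{-1}(y)$ from the absence of closed copies of $S_{\omega}$, so both routes depend on Liu's paper, just on different theorems of it. What your route buys is a proof that needs nothing about the image space and that in effect generalizes Theorem~\ref{t6} to any regular $X$ satisfying (i)--(iii); what the paper's route buys is brevity, plus the extra information that $Y$ itself has a point-countable base.
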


\begin{proof}
Since $X$ has a point-countable base and $f$ is a closed
sequence-covering map, $Y$ has a point-countable base by
\cite[Theorem 3.1]{LC}. Therefore, $f$ is a boundary-compact map by
\cite[Lemma 3.2]{LC1}. Hence $f$ is an 1-sequence-covering map by
Corollary~\ref{c0}.
\end{proof}

We don't know whether, in Corollary~\ref{c3}, the condition ``$X$
has a point-countable base'' can be replaced by ``$X\in {\it
\Omega}$''. So we have the following question.

\begin{question}
Let $f:X\rightarrow Y$ be a closed sequence-covering map. If $X\in
{\it \Omega}$ (and $X$ is regular), then is $f$ an
1-sequence-covering map?
\end{question}

\begin{corollary}\label{c2}
Let $f:X\rightarrow Y$ be a closed sequence-covering map, where $X$
is a strongly monotonically monolithic space. Then $f$ is an
almost-open map\footnote{$f$ is an {\it almost-open map}\cite{Ar1}
if there exists a point $x_{y}\in f^{-1}(y)$ for each $y\in Y$ such
that for each open neighborhood $U$ of $x_{y}$, $f(U)$ is a
neighborhood of $y$ in $Y$.}.
\end{corollary}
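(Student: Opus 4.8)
The plan is to read off the almost-open point from the $1$-sequence-covering structure already in hand and then to promote ``sequential neighbourhood'' to ``neighbourhood'' by showing that $Y$ is Fr\'echet. By Theorem~\ref{t6}, $f$ is an $1$-sequence-covering map, so for each $y\in Y$ I fix the witnessing point $x_{y}\in f^{-1}(y)$ (in fact $x_{y}\in\partial f^{-1}(y)$). These $x_{y}$ are the candidate points for the almost-open property, so it remains to prove that $f(U)$ is a neighbourhood of $y$ whenever $U$ is open with $x_{y}\in U$.

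First I would record the sequential consequence of $1$-sequence-covering: if $\{y_{n}\}$ is any sequence converging to $y$, then there is a sequence $\{x_{n}\}$ converging to $x_{y}$ with $x_{n}\in f^{-1}(y_{n})$; since $x_{y}\in U$ and $U$ is open, eventually $x_{n}\in U$, and hence eventually $y_{n}=f(x_{n})\in f(U)$. Thus $f(U)$ is a sequential neighbourhood of $y$. This step is routine and uses only Theorem~\ref{t6}.

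The substantive step is to show that $Y$ is a Fr\'echet space. Here I would use that the domain is Fr\'echet (a strongly monotonically monolithic space is first-countable, as is used in the proofs of Lemmas~\ref{l9} and~\ref{l10}, hence Fr\'echet) together with the fact that a closed continuous surjection is hereditarily quotient and that such maps preserve the Fr\'echet property. Concretely, for $A\subset Y$ with $y\in\overline{A}$, closedness of $f$ gives $f(\overline{f^{-1}(A)})=\overline{A}\ni y$, so there is a point $x\in\overline{f^{-1}(A)}$ with $f(x)=y$; applying the Fr\'echet property of $X$ at $x$ yields a sequence in $f^{-1}(A)$ converging to $x$, whose $f$-image is a sequence in $A$ converging to $y$. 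Hence $Y$ is Fr\'echet.

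Finally I would combine the two ingredients: in a Fr\'echet space a sequential neighbourhood of a point is automatically a neighbourhood of that point, for otherwise $y\in\overline{Y\setminus f(U)}$ would produce a sequence in $Y\setminus f(U)$ converging to $y$, contradicting that $f(U)$ is a sequential neighbourhood of $y$. Therefore $f(U)$ is a neighbourhood of $y$ for every open $U$ with $x_{y}\in U$, so $f$ is almost-open, witnessed by the points $x_{y}$. I expect the main obstacle to be the Fr\'echet-ness of $Y$: confirming that the strongly monotonically monolithic domain is Fr\'echet and that this property is transmitted through the closed surjection, since this is exactly what licenses the passage from sequential neighbourhoods to honest neighbourhoods.
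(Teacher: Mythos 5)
Your proof is correct and follows essentially the same route as the paper's: obtain the witnessing point $x_{y}$ from Theorem~\ref{t6}, check that $f(U)$ is a sequential neighbourhood of $y$ via the defining sequence-lifting property, and then upgrade this to a genuine neighbourhood using the Fr\'echet property of $Y$. The only difference is cosmetic: you spell out why $Y$ is Fr\'echet (first-countability of the strongly monotonically monolithic domain, obtained from the countable external base of a singleton, transmitted through the closed surjection by the standard $f(\overline{f^{-1}(A)})=\overline{A}$ argument), whereas the paper simply asserts this step.
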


\begin{proof}
$f$ is an 1-sequence-covering map by Theorem~\ref{t6}. For each
point $y\in Y$, there exists a point $x_{y}\in f^{-1}(y)$ satisfying
the Definition 2.2(4). Let $U$ be an open neighborhood of $x_{y}$.
Then $f(U)$ is a sequential neighborhood of $y$. Indeed, for each
sequence $\{y_{n}\}\subset Y$ converging to $y$, there exists a
sequence $\{x_{n}\}\subset X$ such that $\{x_{n}\}$ converges to
$x_{y}$ and $x_{n}\in f^{-1}(y_{n})$ for each $n\in \mathbb{N}$.
Obviously, $\{x_{n}\}$ is eventually in $U$, and therefore,
$\{y_{n}\}$ is eventually in $f(U)$. Hence $f(U)$ is a sequential
neighborhood of $y$. Since $X$ is first-countable, $Y$ is a
Fr\'echet space. Then $f(U)$ is a neighborhood of $y$. Otherwise,
suppose $y\in Y\setminus\mbox{int}(f(U))$, and therefore, $y\in
\overline{Y\setminus f(U)}$. Since $Y$ is Fr\'echet, there exists a
sequence $\{y_{n}\}\subset Y\setminus f(U)$ converging to $y$. This
is a contradiction with $f(U)$ is a sequential neighborhood of $y$.
Therefore, $f$ is an almost-open map.
\end{proof}

{\bf Remark} In \cite{TV}, V. V. Tkachuk has proved that closed maps
don't preserve strongly monotonically monolithic spaces. However, if
perfect maps\footnote{A map $f$ is called {\it perfect} if $f$ is a
closed and compact map} preserve strongly monotonically monolithic
spaces, then it is easy to see that closed sequence-covering maps
preserve strongly monotonically monolithity by Lemma~\ref{l10}. So
we have the following two questions.

\begin{question}
Do closed sequence-covering maps (or an almost open and closed maps)
preserve strongly monotonically monolithity?
\end{question}

\begin{question}
Do perfect maps preserve strongly monotonically monolithity?
\end{question}

In \cite{TV}, V. V. Tkachuk has also proved that open and separable
maps preserve strongly monotonically monolithity. However, we have
the following result.

\begin{theorem}
Let $f:X\rightarrow Y$ be an open and closed map, where $X$ is a
strongly monotonically monolithic space. Then $Y$ is a strongly
monotonically monolithic space.
\end{theorem}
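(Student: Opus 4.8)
The goal is to prove that if $f:X\rightarrow Y$ is open and closed with $X$ strongly monotonically monolithic, then $Y$ is strongly monotonically monolithic. The definition requires assigning, to each $A\subset Y$, an external base $\mathcal{O}_Y(A)$ of $\overline{A}$ satisfying the cardinality bound (a), the monotonicity (b), and the directed-union continuity (c). Since $X$ is strongly monotonically monolithic, we already have such an assignment $\mathcal{O}_X(\cdot)$ on $X$. The plan is to transport this structure across $f$ by taking images of the $X$-side base elements, exploiting that $f$ is open (so images of open sets are open) and closed (so we can control closures and cardinalities of preimages).

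Let me sketch the construction. For $A\subset Y$, a natural candidate is to pull back to a suitable subset of $X$, apply $\mathcal{O}_X$, and push forward. The subtlety is \emph{which} subset of $X$ to use: taking the full preimage $f^{-1}(A)$ can blow up the cardinality and destroy condition (a), since fibers need not be small. First I would verify that under an open closed map one may choose, for each $y\in Y$, a point (or a small set) in the fiber in a coherent way — but coherence across the directed-union condition (c) is delicate. A cleaner route: for $A\subset Y$ set $A^{*}=$ some canonically chosen subset of $f^{-1}(A)$ meeting each fiber over $A$ in a controlled manner, with $|A^{*}|\le\max\{|A|,\omega\}$, and then define $\mathcal{O}_Y(A)=\{\,\mathrm{Int}(f(O)): O\in\mathcal{O}_X(A^{*})\,\}$, or more simply $\{f(O):O\in\mathcal{O}_X(A^{*})\}$ using openness of $f$ so that each $f(O)$ is open in $Y$. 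Then I would check: (i) this family is an external base of $\overline{A}$ in $Y$, using that $f$ is closed to relate $\overline{A^{*}}$ in $X$ to $f^{-1}(\overline{A})$ and that $\mathcal{O}_X(A^{*})$ is an external base of $\overline{A^{*}}$; (ii) the cardinality bound follows from $|\mathcal{O}_X(A^{*})|\le\max\{|A^{*}|,\omega\}=\max\{|A|,\omega\}$; (iii) monotonicity (b) follows from $A\subset B\Rightarrow A^{*}\subset B^{*}$ together with the monotonicity of $\mathcal{O}_X$.

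The main obstacle is the directed-union condition (c), and it is entirely tied to how the choice $A\mapsto A^{*}$ interacts with unions. For (c) to hold we need $(\bigcup_{\beta<\alpha}A_\beta)^{*}=\bigcup_{\beta<\alpha}A_\beta^{*}$ for increasing chains, so the selection $A\mapsto A^{*}$ must itself be monotone and continuous along chains; this forces $A^{*}$ to be defined pointwise, e.g. $A^{*}=\bigcup_{y\in A}\{x_y\}$ for a fixed global choice function $y\mapsto x_y\in f^{-1}(y)$. With such a pointwise choice, $A\subset B\Rightarrow A^{*}\subset B^{*}$ and $(\bigcup A_\beta)^{*}=\bigcup A_\beta^{*}$ are immediate, and then condition (c) for $\mathcal{O}_Y$ reduces to condition (c) for $\mathcal{O}_X$ applied to the chain $\{A_\beta^{*}\}$. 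The hard part is then showing that the resulting $\{f(O):O\in\mathcal{O}_X(A^{*})\}$ is genuinely an external base of $\overline{A}$ in $Y$ rather than just of $f(\overline{A^{*}})$; here closedness of $f$ is essential to ensure that an open $V\ni y$ in $Y$ pulls back to an open set in $X$ meeting the fiber point $x_y$, and that some $O\in\mathcal{O}_X(A^{*})$ with $x_y\in O\subset f^{-1}(V)$ yields $f(O)$ with $y\in f(O)\subset V$.

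So concretely I would proceed in this order: fix a choice function $\sigma:Y\to X$ with $f(\sigma(y))=y$; set $A^{*}=\sigma(A)$ and $\mathcal{O}_Y(A)=\{f(O):O\in\mathcal{O}_X(A^{*})\}$; verify each $f(O)$ is open via openness of $f$; verify (a) and (b) directly from the bijective-on-$\sigma(A)$ nature of $\sigma$ and the corresponding properties of $\mathcal{O}_X$; verify (c) via the exact commutation of $\sigma$ with increasing unions; and finally verify the external-base property for $\overline{A}$ in $Y$, which is the crux and where closedness enters, by taking $y\in\overline{A}$ and an open $V\ni y$, lifting to $\sigma(y)\in\overline{\sigma(A)}=\overline{A^{*}}$ (using that $f$ closed gives $f(\overline{A^{*}})\supset\overline{A}$), and selecting $O\in\mathcal{O}_X(A^{*})$ with $\sigma(y)\in O\subset f^{-1}(V)$ so that $y\in f(O)\subset V$. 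I expect the external-base verification to require the most care, since it must handle points of $\overline{A}$ whose fiber points under $\sigma$ may fail to lie in $\overline{A^{*}}$ unless closedness is invoked correctly.
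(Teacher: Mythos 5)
Your overall strategy works and is genuinely different from the paper's, but the crux step is stated incorrectly: the assertion that $\sigma(y)\in\overline{\sigma(A)}$ for $y\in\overline{A}$ is false in general. Closedness of $f$ gives only that \emph{some} point of $f^{-1}(y)$ lies in $\overline{\sigma(A)}$, not that the selected point $\sigma(y)$ does (you flag exactly this danger in your final sentence, yet the lifting step as written contradicts it). The repair is immediate and stays entirely inside your framework: since $f$ is closed, $f(\overline{\sigma(A)})$ is closed and contains $f(\sigma(A))=A$, hence contains $\overline{A}$; so for $y\in\overline{A}$ and open $V\ni y$ pick \emph{any} $x\in f^{-1}(y)\cap\overline{\sigma(A)}$, and since $\mathcal{O}_X(\sigma(A))$ is an external base of $\overline{\sigma(A)}$ at every point of that closure (not just at $\sigma$-selected points), there is $O\in\mathcal{O}_X(\sigma(A))$ with $x\in O\subset f^{-1}(V)$, whence $y\in f(O)\subset V$, and $f(O)$ is open because $f$ is open. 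Conditions (a)--(c) then go through exactly as you indicate, because $A\mapsto\sigma(A)$ is injective (so $|\sigma(A)|=|A|$), monotone, and commutes with increasing unions.

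With that one-line fix your proof is correct, and it is in fact more elementary than the paper's. The paper does not use a one-point-per-fiber selection: it first quotes \cite[Theorem 3.4]{LC} to conclude that an open and closed map here is sequence-covering, then invokes Lemma~\ref{l10} to get that each $\partial f^{-1}(y)$ is compact, hence metrizable and separable by \cite[Theorem 2.7]{TV}; it chooses a countable dense subset $A_y\subset\partial f^{-1}(y)$ (a singleton when $y$ is isolated), sets $A_B=\cup\{A_y:y\in B\}$ and $\mathcal{N}(B)=\{f(W):W\in\mathcal{O}(A_B)\}$, and in the external-base check lifts $y\in\overline{B}$ using \emph{openness} of $f$ (via $f^{-1}(\overline{B})\subset\overline{f^{-1}(B)}$, so $\partial f^{-1}(y)\subset\overline{f^{-1}(B)}$), asserting that any $x\in\partial f^{-1}(y)$ lies in $\overline{A_B}$ --- itself the delicate point of that argument. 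Your version bypasses the entire boundary-compactness machinery of Section 5 (Lemmas~\ref{l9} and~\ref{l10}) and uses closedness only once, for the inclusion $\overline{A}\subset f(\overline{\sigma(A)})$; the cardinality bound (a) comes from $|\sigma(A)|=|A|$ just as cheaply as from the countability of the $A_y$'s in the paper. What the paper's heavier route buys is structural information (compact, metrizable fiber boundaries) that it reuses elsewhere in the section; what yours buys is a shorter, self-contained proof of this theorem alone.
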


\begin{proof}
From \cite[Theorem 3.4]{LC} it follows that $f$ is a
sequence-covering map. Therefore, $\partial f^{-1}(y)$ is compact
for each point $y\in Y$ by Lemma~\ref{l10}. Then $\partial
f^{-1}(y)$ is metrizable by \cite[Theorem 2.7]{TV}, and hence it is
separable, for each point $y\in Y$. For each point $y\in Y$, if $y$
is a non-isolated point, let $A_{y}$ be a countable dense set in the
subspace $\partial f^{-1}(y)$; if $y$ is an isolated point, then we
choose a point $x_{y}\in f^{-1}(y)$ and let $A_{y}=\{x_{y}\}$.

Let $B\subset Y$. Put $A_{B}=\cup\{A_{y}: y\in B\}$ and
$\mathcal{N}(B)=\{f(W): W\in\mathcal{O}(A_{B})\}$. It is easy to see
that $\mathcal{N}(B)$ satisfies the conditions (a)-(c) of the
definition of strongly monotonically monolithity. Therefore, we only
need to prove that $\mathcal{N}(B)$ is an external base for
$\overline{B}$. For each point $y\in\overline{B}$, let $U$ be open
subset in $Y$ with $y\in U$.

Case 1: $y$ is a non-isolated point in $Y$.

Since $f$ is an open map, $\emptyset\neq f^{-1}(y)\subset
\overline{f^{-1}(B)}$, and hence $\partial f^{-1}(y)\subset
\overline{f^{-1}(B)}$. Take any point $x\in
\partial f^{-1}(y)$. Then $x\in \overline{A_{B}}$. Therefore, there
exists a $V\in \mathcal{O}(A_{B})$ such that $x\in V\subset
f^{-1}(U)$. So $W=f(V)\in\mathcal{N}(B)$ and $y\in W\subset U$.

Case 2: $y$ is an isolated point in $Y$.

It is easy to see that $\{y\}\in \mathcal{N}(B)$, and therefore,
$y\in \{y\}\subset U$.

In a word, $\mathcal{N}(B)$ is an external base for $\overline{B}$.
\end{proof}

Let $\mathcal{B}=\{B_{\alpha}:\alpha\in H\}$ be a family of subsets
of a space $X$. $\mathcal{B}$ is {\it point-discrete}  (or {\it
weakly hereditarily closure-preserving}) if $\{x_{\alpha}:\alpha\in
H\}$ is closed discrete in $X$, whenever $x_{\alpha}\in B_{\alpha}$
for each $\alpha\in H$.

It is well-known that metrizability, $g$-metrizability,
$\aleph$-spaces, and spaces with a point-countable base are
preserved by closed sequence-covering maps, see \cite{LC, YP}. Next,
we shall consider spaces with a $\sigma$-point-discrete $k$-network,
and shall prove that spaces with $\sigma$-point-discrete $k$-network
are preserved by closed sequence-covering maps. Firstly, we give
some technique lemmas.

\begin{lemma}\label{l7}
Let $X$ be an $\aleph_{1}$-compact space\footnote{A space $X$ is
called {\it $\aleph_{1}$-compact} if each subset of $X$ with a
cardinality of $\aleph_{1}$ has a cluster point.} with a
$\sigma$-point-discrete network. Then $X$ has a countable network.
\end{lemma}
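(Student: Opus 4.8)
The plan is to construct a countable network for $X$ directly. I should note at the outset that a single point-discrete layer of the given network need not be countable (for instance, on a countable discrete space the family of all infinite subsets is point-discrete), so I cannot hope to prove that each layer is countable; instead I must extract a countable subfamily that still networks every point. Throughout I use $\aleph_1$-compactness in its equivalent form: every closed discrete subset of $X$ is countable.

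Write the $\sigma$-point-discrete network as $\mathcal N=\bigcup_{n}\mathcal N_n$ with each $\mathcal N_n$ point-discrete. The first and most robust step is a greedy transfinite selection of an injective transversal of a fixed layer $\mathcal N_n$: enumerate $\mathcal N_n=\{B_\xi:\xi<|\mathcal N_n|\}$ and at stage $\xi$ choose a point of $B_\xi$ outside the set of points chosen so far whenever this is possible. Extending the chosen points to a full transversal of $\mathcal N_n$ shows, by point-discreteness, that the chosen points form a closed discrete set, hence a countable set $P_n$. Consequently only countably many members of $\mathcal N_n$ receive a new point, while every remaining member is contained in $P_n$ (at an unsuccessful stage $\xi$ one has $B_\xi\subseteq P_n$). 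Thus each layer splits into countably many ``large'' members together with a possibly uncountable family of members contained in the countable set $P_n$; in particular any member not contained in $P_n$ is one of the countably many large ones.

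Two consequences follow at once. First, $D:=\bigcup_n P_n$ is countable and dense: for a nonempty open $U$ and $x\in U$, a network element $B$ with $x\in B\subseteq U$ must meet $P_n$, so $U$ meets $D$. Second, $X$ has countable spread, for an uncountable discrete subspace $E$ would, after passing to an uncountable piece lying in one layer and choosing for each $e\in E$ a (necessarily distinct) member $B_e\in\mathcal N_n$ with $e\in B_e\subseteq U_e$ where $U_e\cap E=\{e\}$, yield the transversal $e\mapsto e\in B_e$, which is closed discrete and uncountable, a contradiction. I will retain all of the countably many large members across all layers; since any point of $X\setminus D$ can only be networked by a member not contained in any $P_n$, these large members already network every point outside $D$.

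It therefore remains to network the \emph{countable} set $D$ by countably many sets, and this is where the main difficulty lies. For each $x\in D$ the members of $\mathcal N$ containing $x$ form a poset under inclusion, and what is needed is a countable subfamily that is coinitial from below, i.e.\ a countable network at $x$; summing over the countable set $D$ and adjoining the large members would then finish the proof. The obstacle is that $x$ may lie in uncountably many members of a point-discrete layer, so I must rule out an uncountable \emph{irreducible} family $\{B_\xi\}_{\xi<\omega_1}$ of members through $x$, one admitting open sets $U_\xi\ni x$ with $B_\xi\subseteq U_\xi$ but $B_\eta\not\subseteq U_\xi$ for $\eta\neq\xi$. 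The plan is to convert such a family into an uncountable discrete subspace, contradicting the countable spread just established: the irreducibility supplies, for each pair, a point of $B_\eta$ outside $U_\xi$, and a transfinite thinning should produce points $w_\xi\in B_\xi$ together with neighborhoods isolating each $w_\xi$ from the others. Passing from ``uncountable network-character at a point'' to ``uncountable discrete subspace'', while using only countable spread and the point-discreteness of the single layer in which uncountably many of the $B_\xi$ sit, is the crux of the argument; once it is in place every point of $D$ has a countable network, and the union of these with the countably many large members is the desired countable network for $X$.
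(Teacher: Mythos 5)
Your first half is correct and is essentially the paper's mechanism in a slightly different packaging: the paper works layer by layer, letting $B_{n}$ be the set of points lying in uncountably many members of $\mathcal{P}_{n}$, and shows via transversal/point-discreteness arguments (plus $\aleph_{1}$-compactness) that $B_{n}$ is countable closed discrete and that the traces $\{P\setminus B_{n}:P\in\mathcal{P}_{n}\}$ form a countable family; your greedy injective transversal achieving ``countably many large members, all other members contained in a countable closed discrete set $P_{n}$'' is a valid variant of the same idea, and your deduction that the large members network every point of $X\setminus D$ is sound.

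The genuine gap is in what you call the crux, and it is twofold. First, as written it is not a proof: ``a transfinite thinning should produce points $w_{\xi}\in B_{\xi}$ together with neighborhoods isolating each $w_{\xi}$'' is a plan, not an argument, and the implication you need (uncountably many ``irreducible'' members through a point yield an uncountable discrete subspace) is never established --- it is genuinely problematic, since uncountably many distinct members can all be subsets of the countable set $P_{n}$, so no injective choice of representatives is available and point-discreteness gives you nothing. Second, and more importantly, the difficulty you are attacking is a phantom: a network, unlike a base, need not consist of open sets, and its members need not be drawn from the original family $\mathcal{N}$, so once you know $D$ is countable you may simply adjoin the countably many singletons $\{\{x\}:x\in D\}$, which network every point of $D$ in every neighborhood. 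This is exactly what the paper does with $\{\{x\}:x\in B_{n}\}$. Your quest for a ``countable subfamily coinitial from below'' at each $x\in D$ misreads the network condition (the family $\{\{x\}\}$ is already a network at $x$); with that one observation, your steps through the construction of $D$ and the large members would complete the proof, and the countable-spread digression could be deleted entirely.
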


\begin{proof}
Let $\mathcal{P}=\bigcup_{n\in\mathbb{N}}\mathcal{P}_{n}$ be a
$\sigma$-point-discrete network for $X$, where each
$\mathcal{P}_{n}$ is a point-discrete family for each
$n\in\mathbb{N}$. For each $n\in\mathbb{N}$, let
$$B_{n}=\{x\in X: |(\mathcal{P}_{n})_{x}|>\omega\}.$$

Claim 1: $\{P\setminus B_{n}: P\in\mathcal{P}_{n}\}$ is countable.

Suppose not, there exist an uncountable subset $\{P_{\alpha}:
\alpha<\omega_{1}\}\subset \mathcal{P}_{n}$ and $\{x_{\alpha}:
\alpha<\omega_{1}\}\subset X$ such that $x_{\alpha}\in
P_{\alpha}\setminus B_{n}$. Since $\mathcal{P}_{n}$ is a
point-discrete family and $X$ is $\aleph_{1}$-compact,
$\{x_{\alpha}: \alpha<\omega_{1}\}$ is countable. Without loss of
generality, we can assume that there exists $x\in X\setminus B_{n}$
such that each $x_{\alpha}=x$. Therefore, $x\in B_{n}$, a
contradiction.

Claim 2: For each $n\in\mathbb{N}$, $B_{n}$ is a countable and
closed discrete subspace for $X$.

For each $Z\subset B_{n}$ with $|Z|\leq\omega_{1}$. Let
$Z=\{x_{\alpha}: \alpha\in\bigwedge\}$. By the definition of $B_{n}$
and Well-ordering Theorem, it is easy to obtain by transfinite
induction that $\{P_{\alpha}: \alpha\in\bigwedge\}\subset
\mathcal{P}_{n}$ such that $x_{\alpha}\in P_{\alpha}$ and
$P_{\alpha}\neq P_{\beta}$ for each $\alpha\neq\beta$. Therefore,
$Z$ is a countable and closed discrete subspace for $X$. Hence
$B_{n}$ is a countable and closed discrete subspace.

For each $n\in\mathbb{N}$, let
$\mathcal{P}_{n}^{\prime}=\{P\setminus B_{n}:
P\in\mathcal{P}_{n}\}\cup\{\{x\}: x\in B_{n}\}$. Then
$\mathcal{P}_{n}^{\prime}$ is a countable family.

Obviously, $\bigcup_{n\in\mathbb{N}}\mathcal{P}_{n}^{\prime}$ is a
countable network for $X$.
\end{proof}

The proof of the following lemma is an easy exercise.

\begin{lemma}\label{l8}
Let $\{F_{\alpha}\}_{\alpha\in A}$ be a point-discrete family for
$X$ and countably compact subset $K\subset\bigcup_{\alpha\in
A}F_{\alpha}$. Then there exists a finite family
$\mathcal{F}\subset\{F_{\alpha}\}_{\alpha\in A}$ such that $K\subset
\cup\mathcal{F}$.
\end{lemma}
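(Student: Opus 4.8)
The plan is to argue by contradiction, manufacturing from a hypothetical failure of finite coverability an \emph{infinite closed discrete} subset of $K$, and then to derive a contradiction with countable compactness. So I would begin by supposing that no finite subfamily of $\{F_{\alpha}\}_{\alpha\in A}$ covers $K$. After discarding the empty members (which alters neither $\bigcup_{\alpha\in A}F_{\alpha}$ nor the relevant hypotheses), I may assume each $F_{\alpha}$ is nonempty, so that selections are available for the point-discreteness hypothesis.

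Next I would build, by induction, a sequence of \emph{distinct} indices $\alpha_{n}\in A$ together with points $x_{n}\in K\cap F_{\alpha_{n}}$ satisfying $x_{n+1}\notin\bigcup_{i\le n}F_{\alpha_{i}}$. To start, pick any $x_{1}\in K$ and some $\alpha_{1}$ with $x_{1}\in F_{\alpha_{1}}$. Having produced $\alpha_{1},\dots,\alpha_{n}$, the finite subfamily $\{F_{\alpha_{1}},\dots,F_{\alpha_{n}}\}$ does not cover $K$ by assumption, so there is a point $x_{n+1}\in K\setminus\bigcup_{i\le n}F_{\alpha_{i}}$; choosing $\alpha_{n+1}\in A$ with $x_{n+1}\in F_{\alpha_{n+1}}$ forces $\alpha_{n+1}$ to be new (if $\alpha_{n+1}=\alpha_{i}$ for some $i\le n$, then $x_{n+1}\in F_{\alpha_{i}}$, contradicting the choice of $x_{n+1}$). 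The same condition makes the $x_{n}$ pairwise distinct, so $\{x_{n}:n\in\mathbb{N}\}$ is an infinite subset of $K$.

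Finally I would invoke point-discreteness. Since the $\alpha_{n}$ are distinct, $n\mapsto x_{n}\in F_{\alpha_{n}}$ is a legitimate partial selection over the subfamily $\{F_{\alpha_{n}}:n\in\mathbb{N}\}$; extending it to a full selection by choosing an arbitrary point of each remaining $F_{\alpha}$, the definition of point-discrete guarantees that the resulting set is closed discrete in $X$. Because closed discreteness passes to subsets, $\{x_{n}:n\in\mathbb{N}\}$ is itself closed discrete in $X$ and hence has no accumulation point. On the other hand, being an infinite subset of the countably compact set $K$, it must have an accumulation point in $K$. This contradiction shows that some finite subfamily $\mathcal{F}\subset\{F_{\alpha}\}_{\alpha\in A}$ covers $K$.

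The argument is essentially routine, matching the authors' description of it as an easy exercise; the only points needing a little care are the bookkeeping in the induction, namely keeping the indices distinct so that $n\mapsto x_{n}$ is a genuine partial selection and keeping the $x_{n}$ distinct so that the constructed set is infinite, together with the elementary observation that a subset of a closed discrete set is again closed discrete. I do not expect any genuine obstacle beyond these, since countable compactness directly forbids infinite closed discrete subsets.
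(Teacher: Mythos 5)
Your proof is correct: the paper offers no proof at all (it declares the lemma ``an easy exercise''), and your argument --- assuming no finite subfamily covers $K$, inductively extracting points $x_{n+1}\in K\setminus\bigcup_{i\le n}F_{\alpha_i}$ with necessarily distinct indices, extending the partial selection to a full one (after discarding empty members, which is exactly the right precaution so that the point-discreteness hypothesis applies), and contradicting countable compactness of $K$ with the resulting infinite closed discrete subset --- is precisely the standard argument the authors intend. All the delicate points (distinctness of the $\alpha_n$ and $x_n$, closed discreteness passing to subsets, and the accumulation point lying in $K$) are handled correctly.
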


\begin{lemma}\label{t4}
Let $\mathcal{P}$ be a  family of subsets of a space $X$. Then
$\mathcal{P}$ is a $\sigma$-point-discrete
$wcs^{\ast}$-network\footnote{A family $\mathcal{P}$ of $X$ is
called a {\it $wcs^{\ast}$-network}\cite{LT} of $X$, if whenever a
sequence $\{x_{n}\}$ converges to $x\in U$ with $U$ open in $X$,
there are a $P\in\mathcal{P}$ and a subsequence $\{x_{n_{i}}\}$ of
$\{x_{n}\}$ such that $x_{n_{i}}\in P\subset U$ for each $n\in
\mathbb{N}$} for $X$ if and only if $\mathcal{P}$ is a
$\sigma$-point-discrete $k$-network\footnote{A family $\mathcal{P}$
of $X$ is called a {\it $k$-network}\cite{PO} if whenever $K$ is a
compact subset of $X$ and $K\subset U$ with $U$ open in $X$, there
is a finite subfamily $\mathcal{P}^{\prime}\subset \mathcal{P}$ such
that $K\subset \cup\mathcal{P}^{\prime}\subset U$.} for $X$.
\end{lemma}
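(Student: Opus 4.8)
The statement is an equivalence under the standing hypothesis that $\mathcal P=\bigcup_{n\in\mathbb N}\mathcal P_n$ is $\sigma$-point-discrete, with each $\mathcal P_n$ point-discrete, so the plan is to show that for such a family being a $wcs^{\ast}$-network and being a $k$-network are equivalent. The direction ``$k$-network $\Rightarrow wcs^{\ast}$-network'' is routine and needs no point-discreteness: given a sequence $\{x_n\}\to x\in U$ with $U$ open, the set $K=\{x\}\cup\{x_n:n\in\mathbb N\}$ is compact and $K\subset U$, so a finite $\mathcal P'\subset\mathcal P$ satisfies $K\subset\bigcup\mathcal P'\subset U$; by the pigeonhole principle some $P\in\mathcal P'$ contains infinitely many $x_n$ and has $P\subset U$, which is exactly a $wcs^{\ast}$-witness.

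For the main direction ``$wcs^{\ast}$-network $\Rightarrow k$-network'' I fix a compact $K$ with $K\subset U$, $U$ open, and aim to cover $K$ by finitely many members of $\{P\in\mathcal P:P\subset U\}$ (the union of such members lies automatically in $U$). The first step is to upgrade $K$ to a metrizable space so that I can argue with sequences. Applying the $wcs^{\ast}$ property to the constant sequence at a point $x\in K$ shows that whenever $x\in W$ with $W$ open in $X$ there is a $P\in\mathcal P$ with $x\in P\subset W$; hence the traces $\{P\cap K:P\in\mathcal P\}$ form a network on the subspace $K$, and this network is $\sigma$-point-discrete because traces of point-discrete families are point-discrete. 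Since $K$ is compact it is $\aleph_1$-compact, so Lemma~\ref{l7} endows $K$ with a countable network; a compact Hausdorff space with a countable network is metrizable, so $K$ is sequentially compact.

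Now set $\mathcal G_n=\{P\in\mathcal P_m:m\le n,\ P\subset U\}$ and $G_n=\bigcup\mathcal G_n$. Each $\mathcal G_n$ is point-discrete, since a finite union of point-discrete families is point-discrete (a finite union of closed discrete sets is closed discrete), the $G_n$ increase with $n$, and $K\subset\bigcup_n G_n$ by the constant-sequence observation above. I claim some $G_n$ already covers $K$: otherwise I choose $x_n\in K\setminus G_n$, pass by sequential compactness to a subsequence $x_{n_k}\to x\in K\subset U$, and apply the $wcs^{\ast}$ property to obtain a single $P\in\mathcal P$ with $P\subset U$ containing infinitely many $x_{n_k}$; if $P\in\mathcal P_{m_0}$ then $P\subset G_{m_0}$, yet for large $k$ we have $n_k\ge m_0$ and $x_{n_k}\notin G_{n_k}\supseteq G_{m_0}$, a contradiction. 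With $K\subset G_n=\bigcup\mathcal G_n$ and $\mathcal G_n$ point-discrete, Lemma~\ref{l8} (using that $K$ is countably compact) yields a finite $\mathcal F\subset\mathcal G_n$ with $K\subset\bigcup\mathcal F$; since every member of $\mathcal G_n$ is contained in $U$, this $\mathcal F$ is the required $k$-network witness.

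The main obstacle is the opening step of the second direction: a bare $wcs^{\ast}$-network controls only the behaviour of convergent sequences, so before any covering argument can run I must first know that $K$ is sequentially (equivalently, first-countably) compact, and the only leverage available is the $\sigma$-point-discrete network that the traces provide on $K$ together with Lemma~\ref{l7}. Once $K$ is known to be metrizable, the increasing-levels argument and Lemma~\ref{l8} dispatch the rest mechanically; everything else, including the easy implication, is elementary and uses no further input beyond the stated definitions.
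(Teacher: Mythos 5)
Your proof is correct and follows essentially the same route as the paper: the easy direction via the compact set $\{x\}\cup\{x_n:n\in\mathbb{N}\}$ and pigeonhole, and the main direction by using Lemma~\ref{l7} on the trace network to get $K$ metrizable (hence sequentially compact), then deriving a contradiction from the $wcs^{\ast}$ property if no finite union of levels covers $K$, and finishing with Lemma~\ref{l8}. You additionally make explicit several steps the paper leaves implicit (that a $wcs^{\ast}$-network is a network via constant sequences, that traces of point-discrete families are point-discrete, and that finite unions of point-discrete families are point-discrete), which is a welcome tightening but not a different argument.
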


\begin{proof}
Sufficiency. It is obvious. Hence we only need to prove the
necessity.

Necessity. Let $\mathcal{P}=\bigcup_{n\in\mathbb{N}}\mathcal{P}_{n}$
be a $\sigma$-point-discrete $wcs^{\ast}$-network, where each
$\mathcal{P}_{n}$ is a point-discrete family for each
$n\in\mathbb{N}$. Suppose that $K$ is compact and $K\subset U$ with
$U$ open in $X$. For each $n\in \mathbb{N}$, let
$$\mathcal{P}_{n}^{\prime}=\{P\in\mathcal{P}_{n}: P\subset U\}, F_{n}=\cup\mathcal{P}_{n}^{\prime}.$$
Then there exists $m\in \mathbb{N}$ such that
$K\subset\bigcup_{k\leq m}F_{k}$. Suppose not, there is a sequence
$\{x_{n}\}\subset K$ with $x_{n}\in K-\bigcup_{i\leq n}F_{i}$. By
Lemma~\ref{l7}, it is easy to see that $K$ is metrizable. Therefore,
$K$ is sequentially compact. It follows that there exists a
convergent subsequence of $\{x_{n}\}$. Without loss of generality,
we assume that $x_{n}\rightarrow x$. Since $\mathcal{P}$ is a
$wcs^{\ast}$-network, there exist a $P\in \mathcal{P}$, and a
subsequence $\{x_{n_{i}}\}$ of $\{x_{n}\}$ such that $\{x_{n_{i}}:
i\in \mathbb{N}\}\subset P\subset U$. Therefore, there exists $l\in
\mathbb{N}$ such that $P\in \mathcal{P}_{l}^{\prime}$. Choose $i>
l$, since $P\subset F_{l}$, $x_{n_{i}}\in F_{l}$, a contradiction.
Hence there exists $m\in \mathbb{N}$ such that
$K\subset\bigcup_{k\leq m}F_{k}$. By Lemma~\ref{l8}, there is a
finite family $\mathcal{P}^{\prime\prime}\subset\bigcup_{i\leq
m}\mathcal{P}_{i}^{\prime}$ such that $K\subset
\cup\mathcal{P}^{\prime\prime}\subset U$. Therefore, $\mathcal{P}$
is a $k$-network.
\end{proof}

\begin{theorem}
Closed sequence-covering maps preserve spaces with a
$\sigma$-point-discrete $k$-network.
\end{theorem}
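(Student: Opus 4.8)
The plan is to transport a $\sigma$-point-discrete $k$-network of $X$ to $Y$ by taking pointwise images, passing through $wcs^{\ast}$-networks so that I can use Lemma~\ref{t4} twice. Let $f:X\rightarrow Y$ be a closed sequence-covering map and let $\mathcal{P}=\bigcup_{n\in\mathbb{N}}\mathcal{P}_{n}$ be a $\sigma$-point-discrete $k$-network for $X$, where each $\mathcal{P}_{n}$ is point-discrete. By Lemma~\ref{t4}, $\mathcal{P}$ is also a $wcs^{\ast}$-network for $X$. I would set $\mathcal{Q}_{n}=\{f(P):P\in\mathcal{P}_{n}\}$ and $\mathcal{Q}=\bigcup_{n}\mathcal{Q}_{n}$, and claim that $\mathcal{Q}$ is a $\sigma$-point-discrete $wcs^{\ast}$-network for $Y$; Lemma~\ref{t4} then gives the desired $\sigma$-point-discrete $k$-network on $Y$. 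It is worth noting in advance that only sequence-covering will be used for the network condition, while closedness is used solely to keep the image family point-discrete.

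First I would verify that $\mathcal{Q}$ is a $wcs^{\ast}$-network for $Y$; this is where sequence-covering enters. Given a sequence $\{y_{n}\}$ converging to $y$ with $y\in U$ and $U$ open in $Y$, sequence-covering yields a sequence $\{x_{n}\}$ converging to some $x$ in $X$ with each $x_{n}\in f^{-1}(y_{n})$. Continuity of $f$ together with the Hausdorff property forces $f(x)=y$, so $x\in f^{-1}(y)\subset f^{-1}(U)$, and $f^{-1}(U)$ is open. Since $\mathcal{P}$ is a $wcs^{\ast}$-network for $X$, there are $P\in\mathcal{P}$ and a subsequence $\{x_{n_{i}}\}$ of $\{x_{n}\}$ with $x_{n_{i}}\in P\subset f^{-1}(U)$. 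Applying $f$ gives $y_{n_{i}}\in f(P)\subset U$ with $f(P)\in\mathcal{Q}$, which is exactly the $wcs^{\ast}$-network condition.

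The main obstacle is preserving point-discreteness, and I expect this to be the heart of the argument. The key observation is that a closed map sends closed discrete sets to closed discrete sets: if $A\subset X$ is closed discrete and $y^{\ast}\in\overline{f(A)}$, then $A\setminus f^{-1}(y^{\ast})$ is closed, since every subset of a closed discrete set is closed, so its image $f(A)\setminus\{y^{\ast}\}$ is closed in $Y$; this forces $y^{\ast}\in f(A)$ and $y^{\ast}$ isolated in $f(A)$, whence $f(A)$ is closed discrete. Granting this, I would fix $n$ and, for each member $F\in\mathcal{Q}_{n}$, select one $P_{F}\in\mathcal{P}_{n}$ with $f(P_{F})=F$; distinct members $F$ force distinct $P_{F}$, so $\{P_{F}:F\in\mathcal{Q}_{n}\}$ is a subfamily of $\mathcal{P}_{n}$ with distinct members. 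To check point-discreteness of $\mathcal{Q}_{n}$, take an arbitrary selection $z_{F}\in F$ and lift it to $w_{F}\in P_{F}$ with $f(w_{F})=z_{F}$; by point-discreteness of $\mathcal{P}_{n}$ the set $\{w_{F}:F\in\mathcal{Q}_{n}\}$ is closed discrete in $X$, and hence $\{z_{F}:F\in\mathcal{Q}_{n}\}=f(\{w_{F}\})$ is closed discrete in $Y$ by the observation above. Thus each $\mathcal{Q}_{n}$ is point-discrete and $\mathcal{Q}=\bigcup_{n}\mathcal{Q}_{n}$ is $\sigma$-point-discrete. Combining the two parts, $\mathcal{Q}$ is a $\sigma$-point-discrete $wcs^{\ast}$-network for $Y$, and Lemma~\ref{t4} completes the proof.
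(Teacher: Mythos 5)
Your proof is correct and takes essentially the same route as the paper: the paper's entire proof is the remark that closed sequence-covering maps preserve $\sigma$-point-discrete $wcs^{\ast}$-networks (left as ``easy to see'') followed by an appeal to Lemma~\ref{t4}. Your two verifications --- the $wcs^{\ast}$-network condition via sequence-covering (with continuity plus Hausdorffness identifying the limit), and point-discreteness via the fact that closed maps send closed discrete sets to closed discrete sets --- supply exactly the details the paper omits.
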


\begin{proof}
It is easy to see that closed sequence-covering maps preserve spaces
with a $\sigma$-point-discrete $wcs^{\ast}$-network. Hence closed
sequence-covering maps preserve spaces with a
$\sigma$-point-discrete $k$-network by Lemma~\ref{t4}.
\end{proof}

\begin{question}
Do closed maps preserve spaces with a $\sigma$-point-discrete
$k$-network?
\end{question}

\vskip0.9cm

\end{document}